  \newlength{\defbaselineskip}
  \newcommand{\setlinespacing}[1]%
                               {\setlenght{\baselineskip}{#1 \defbaselineskip}}
  \newcommand{\supp}{{\textrm{supp}\, }}
  \theoremstyle{plain}
  \newtheorem{thm}{Theorem}[section]
  \newtheorem{cor}[thm]{Corollary}
  \newtheorem{pro}[thm]{Proposition}
  \newtheorem{rem}[thm]{Remark}
  \theoremstyle{definition}
  \newtheorem{defi}[thm]{Definition}
  \newtheorem{exm}[thm]{Example}
\numberwithin{equation}{section}
\begin{document}
\begin{center}
{\bf  Characterization Conditions and the Numerical Index}
\end{center}
\vspace{.15 cm}
\begin{center}
\small{Asuman G\"{u}ven AKSOY  and Grzegorz LEWICKI}
\end{center}

\mbox{~~~}\\
\mbox{~~~}\\
{\bf Abstract.} {\footnotesize In this paper we survey  some recent results concerning the numerical index $n(\cdot)$ for large classes of Banach spaces, including vector valued $\ell_p$-spaces and $\ell_p$-sums of Banach spaces where $1\leq p < \infty$.
In particular by defining  two conditions on a norm of a Banach space $X$, namely a Local Characterization Condition (LCC) and a Global Characterization Condition (GCC),
 we are able to show that if a norm on $X$ satisfies the (LCC), then $n(X) = \displaystyle\lim_m n(X_m).$  For the case in which  $ \mathbb{N}$
 is  replaced by a directed, infinite set $S$, we will  prove an analogous result  for
 $X$ satisfying the (GCC). Our approach  is motivated by the fact that $ n(L_p(\mu, X))= n(\ell_p(X)) = \displaystyle \lim_m n(\ell_p^m (X))$  \cite  {aga-ed-kham}.\\
\footnotetext{{\bf Mathematics Subject Classification (2000):}
41A35, 41A65, 47A12, 47H10. \vskip1mm {\bf Key words: } Numerical index, Numerical
radius,  Characterization conditions.}

\section{ Introduction}
Let $X$ be a Banach space over $\mathbb{R}$ or $\mathbb{C}$. We write $B_{X}$ for
the closed unit ball and $S_{X}$ for the unit sphere of $X$.
 The dual space is denoted by $X^{*}$ and the Banach algebra of all
continuous linear operators on $X$ is denoted by $B(X)$. For a linear subspace $Y$ of $X$ we denote by $ \mathcal{P}(X,Y)$ the set of all linear, continuous projections from $X$ onto $Y.$
\begin{defi}
The \textit{numerical range} of $T\in B(X)$ is defined by
$$W(T)= \{ x^{*}(Tx)  :~x\in S_{X},~x^{*}\in S_{X^{*}},~x^{*}(x)=1\}\cdot$$
The \textit{numerical radius} of $T$ is then given by
$$\nu (T)=\sup\{\vert \lambda\vert : ~\lambda\in W(T)\}\cdot$$
\end{defi}
Clearly, $\nu(\cdot)$ is a semi-norm on $B(X)$ and $ \nu(T) \le \Vert T\Vert$ for
all $T\in B(X)$.
 The \textit{numerical index} of $X$ is defined by
$$n(X)=\inf\{\nu(T) :~ T\in S_{B(X)}\}\cdot$$
Equivalently, the numerical index $n(X)$ is the greatest constant $k
\geq 0$ such that  $k\|T\| \leq \nu(T)$ for every $T \in B(X)$. The concept of numerical index was first introduced by Lumer
\cite{lg} in 1968. Since then, much attention has been paid to this
 equivalence constant between the numerical radius and the usual
norm in the Banach algebra of all bounded linear operators of a
Banach space.  It turns out that numerical index behave differently with respect to real or complex Banach spaces. In particular, it is known that  $0 \leq n(X) \leq 1$ if $X$ is a real space, and $\displaystyle\frac{1}{e}\leq n(X) \leq 1$ if $X$ is a complex space  \cite {JD-CM-JP-AW}. Furthermore, $n(X)> 0$ if and only if $\nu (\cdot)$ and $\|\cdot\|$ are equivalent norms.  For a Hilbert space $H$ of dimension greater than one, it is known that $n(H)= \displaystyle \frac{1}{2}$ in the complex case and $n(H) = 0$ in the real case. Classical references for this subject are the monographs by F. Bonsall and J. Duncan  \cite{bff-dj1} ,\cite{bff-dj2}  and the book of K. E. Gustafson and D. K. Rao \cite{gke-rdkm} for the Hilbert space case.   For more recent results relating to our discussion in this paper  we refer the reader to
\cite{aag-cbl}, \cite {aag-gl}, \cite{aga-ed-kham}, \cite{ee},
\cite{fc-mm-pr}, \cite{gke-rdkm}, \cite{vk-mm-jm-rp} \cite{lg-mm-pr}, \cite{mm}, \cite {mm-jm-mp-br} and \cite{pal}. The last detailed survey paper  on the  numerical index appeared in $2006$ by V. Kadets, at al,  \cite{VK-MM-RP}, which is a good source for open problems and references pertaining to the numerical index. In this paper we survey results on numerical index  emphasizing  progress since $2006$.
 However, our focus is motivated  by  results such as the one given  in \cite{aga-ed-kham} where the numerical index of vector-valued function spaces is considered  and a proof of $$ n(L_p(\mu, X)) = \displaystyle \lim_m n( (\ell_p^m (X))$$ is provided for a Banach space $X$ and for $1 \leq p < \infty$.  In \cite{AL} 
 above type of limit theorem for a class of Banach spaces including vector valued $\ell_p$ or $L_p$ spaces are obtained. 

 The study of the numerical index of absolute sums of Banach spaces is given in \cite{mm-jm-mp-br}, where under suitable conditions it is shown that the numerical index of a sum is greater or equal to the lim sup  of the numerical index of the summands (see Theorem 5.1 of \cite{mm-jm-mp-br}). Here we present a result which is an improvement over the limit theorem presented in \cite{mm-jm-mp-br}. 
 In \cite{AL}, we show the lim inf of the numerical index of the summands is greater or equal to the numerical index of the sum provided Banach space satisfies certain conditions. 
 
 We discuss what we mean by the  norm of a Banach space satisfying a condition called the Local Characterization Condition (LCC) or a condition called the Global Characterization Condition (GCC) and provide examples of Banach spaces satisfying the above mentioned conditions.
 We show if a norm on $X$ satisfies the local characterization condition, then
$$n(X) = \displaystyle\lim_m n(X_m)  \quad \mbox{and}\quad  n(X) = \displaystyle\lim_{s\in S} n(X_s)$$
 with the second equality holding true when  $X$ satisfies the (GCC) and where $S$ is any directed,  infinite set. 

\section{Characterization conditions and numerical index }

Given an arbitrary family of  $\{X_i \,\, i\in I\}$  of Banach spaces, we denote  $[ \oplus_{i\in I} {X_i}]_{c_{0}}$ (resp. $[ \oplus_{i\in I} {X_i}]_{\ell _{1}}]$ , \, $[ \oplus_{i\in I} {X_i}]_{\ell _{\infty}}$ ), the $c_0$-sum, (resp. $\ell_1$-sum, $\ell_{\infty}$-sum) of the family. In the case when $I$ consists of only two elements, we use the notation $X\oplus_{\infty} Y$ or $X\oplus_{1} Y$ . It is customary to use the notation $c_0(X), \ell_1(X)$ or $\ell_{\infty}(X)$ for countable copies of $X$.
First task of investigation is to check whether or not  numerical index of $c_0, \ell_1$, and $\ell_{\infty}$-sums can be computed in terms of the summands. The following proposition due to M. Martin and R. Paya (\cite{MP},Proposition 1), gives an affirmative answer.

\begin{pro}
Let   $\{X_i \,\, i\in I\}$  be a  set of Banach spaces. Then

$$n([\oplus_{i\in I} {X_i}]_{c_{0}}) = n([  \oplus_{i\in I} {X_i}]_{\ell _{1}})=n([  \oplus_{i\in I} {X_i}]_{\ell _{\infty}})  =  \displaystyle \inf _{i} n(X_i).$$
\end{pro}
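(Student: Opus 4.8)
The plan is to prove the chain of equalities by a combination of two inequalities, handled separately, exploiting the well-known duality relations among $c_0$-, $\ell_1$- and $\ell_\infty$-sums. First I would establish the easy direction, namely that each of the three numerical indices is $\le \inf_i n(X_i)$. For this, fix $j\in I$ and note that there is a natural norm-one projection $\pi_j$ from the sum $Z$ (in any of the three norms) onto $X_j$, together with a norm-one inclusion $\iota_j: X_j\to Z$ with $\pi_j\iota_j = \mathrm{id}_{X_j}$; moreover in all three cases the inclusion is an \emph{isometric $L$- or $M$-embedding-type} map so that for any $T\in B(X_j)$ the lifted operator $\widehat T := \iota_j T \pi_j \in B(Z)$ satisfies $\|\widehat T\| = \|T\|$ and, crucially, $\nu_Z(\widehat T) = \nu_{X_j}(T)$. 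The equality of numerical radii is the point to check carefully: one direction is automatic because $\pi_j,\iota_j$ are norm-one, and the reverse uses that a norming pair $(x,x^*)$ for $X_j$ extends to a norming pair $(\iota_j x, x^*\pi_j)$ for $Z$ in each of the three sum-norms (using the specific description of $Z^*$, e.g. $[\oplus X_i]_{c_0}^* = [\oplus X_i^*]_{\ell_1}$, etc.). Granting this, $n(Z)\le \nu_Z(\widehat T)/\|\widehat T\| = \nu_{X_j}(T)/\|T\|$, and taking the infimum over $T$ and then over $j$ gives $n(Z)\le \inf_i n(X_i)$.

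For the reverse inequality $n(Z)\ge \inf_i n(X_i)$ I would argue directly with an arbitrary $T\in B(Z)$ of norm one and a near-norming pair. The main technical device is that for the $c_0$-, $\ell_1$- and $\ell_\infty$-sums, an element of the unit sphere that nearly attains the norm is, up to $\varepsilon$, supported on finitely many coordinates, and a norming functional likewise concentrates on finitely many coordinates; combined with the ``$M$- or $L$-structure'' this lets one reduce, modulo $\varepsilon$, to estimating $\nu$ of the compressions $\pi_i T \iota_i \in B(X_i)$. Concretely, given $x\in S_Z$ and $x^*\in S_{Z^*}$ with $x^*(x)=1$ and $\|Tx\|$ close to $1$, one shows $|x^*(Tx)|$ is, up to an error controlled by $\varepsilon$, dominated by a convex-combination-type average of quantities $|y_i^*(\pi_i T \iota_i\, y_i)|$ over the ``active'' coordinates $i$, each of which is $\le \nu_{X_i}(\pi_i T \iota_i)\le \|\pi_i T\iota_i\|\cdot n(X_i)^{-1}\cdot\nu_{X_i}(\cdot)$ — rather, each of which is $\le \nu_{X_i}(\pi_iT\iota_i)$. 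Since $\|\pi_i T\iota_i\|\le 1$, we get $\nu_{X_i}(\pi_iT\iota_i)\ge n(X_i)\|\pi_iT\iota_i\|$, but what we actually need runs the other way; so instead the clean route is: $n(X_i)\|T\|$-type bounds are not quite what appears, and one should instead bound $\nu_Z(T)$ from below by $\inf_i n(X_i)$ times $\|T\|$ by choosing the coordinate $i_0$ where $\|\pi_{i_0}T\iota_{i_0}\|$ is nearly maximal and showing $\|\pi_{i_0}T\iota_{i_0}\|$ is close to $\|T\|=1$, then $\nu_Z(T)\ge \nu_{X_{i_0}}(\pi_{i_0}T\iota_{i_0})\ge n(X_{i_0})\,\|\pi_{i_0}T\iota_{i_0}\| \ge \inf_i n(X_i)\,(1-\varepsilon)$. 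Letting $\varepsilon\to 0$ and taking the infimum over $T\in S_{B(Z)}$ yields $n(Z)\ge \inf_i n(X_i)$.

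The three cases then differ only in the bookkeeping: in the $\ell_\infty$-sum one picks out a coordinate carrying almost all of $\|x\|$ \emph{and} almost all of the action of $T$; in the $c_0$-sum one additionally uses that the norm is attained ``at infinity only in the limit'', so a finite truncation suffices; in the $\ell_1$-sum one dualizes, using $[\oplus X_i]_{\ell_1}^* = [\oplus X_i^*]_{\ell_\infty}$, and runs the $\ell_\infty$ argument on the adjoint $T^*$, invoking the standard fact that $\nu(T^*) = \nu(T)$ and $\|T^*\|=\|T\|$, hence $n(X^*)$ controls things through $n$ of the summands' duals — here one must be a little careful and instead argue that $\nu_{Z}(T)\ge n(X_{i_0})\|\pi_{i_0}T\iota_{i_0}\|$ survives the duality, which it does because the projections and inclusions are weak-$*$ continuous adjoint pairs. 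I expect the genuine obstacle to be exactly the verification that $\nu_Z(\iota_j T\pi_j) = \nu_{X_j}(T)$ and, in the lower bound, the ``localization'' lemma asserting that $\|T\|$ is essentially realized on a single summand up to $\varepsilon$ — these are where the special geometry of the three sum-norms is really used, and everything else is an $\varepsilon$-management exercise. Since the statement is quoted from \cite{MP}, I would in practice cite that reference for the full details and only sketch the above; but the self-contained argument is as outlined.
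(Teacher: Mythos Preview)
The paper does not provide its own proof of this proposition; it is quoted as a known result of Mart\'{\i}n and Pay\'a (\cite{MP}, Proposition~1) and used as background. So there is nothing to compare your argument against beyond the citation, and your closing remark that ``in practice'' you would cite \cite{MP} is exactly what the authors do.

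That said, the self-contained sketch you give for the lower bound $n(Z)\ge \inf_i n(X_i)$ has a genuine gap. The ``localization lemma'' you rely on---that for every $T\in S_{B(Z)}$ some diagonal compression $\pi_{i_0}T\iota_{i_0}$ has norm close to $\|T\|=1$---is false. Take $Z=\mathbb{K}\oplus_\infty\mathbb{K}$ and $T(a,b)=(b,a)$: then $\|T\|=1$ while $\pi_1 T\iota_1=\pi_2 T\iota_2=0$. More drastically, the unilateral shift on $c_0$, on $\ell_1$, or on $\ell_\infty$ has norm $1$ and every diagonal block $\pi_i T\iota_i$ equal to zero. The norm of an operator on these sums can be carried entirely by the off-diagonal pieces $\pi_j T\iota_i$ with $i\ne j$, and your scheme never sees those. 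Hence the chain
\[
\nu_Z(T)\ \ge\ \nu_{X_{i_0}}(\pi_{i_0}T\iota_{i_0})\ \ge\ n(X_{i_0})\,\|\pi_{i_0}T\iota_{i_0}\|\ \ge\ \bigl(\inf_i n(X_i)\bigr)(1-\varepsilon)
\]
collapses at the last inequality. The same obstruction undermines your duality route for the $\ell_1$-sum, since the shift on $\ell_1$ already shows that all diagonal blocks of $T$ (equivalently of $T^*$) can vanish. Your upper bound $n(Z)\le\inf_i n(X_i)$, by contrast, is correct as written.
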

The above proposition is not true for  $\ell_p$-sums if $p \neq 1, \infty$.  However, there is an interesting example given in (\cite{MP}, Example 2.b) which asserts the existence of a real Banach space $X$ for which the numerical radius is a norm but it is not equivalent to operator norm. In other words the numerical index of $X$ is zero even though the numerical radius $\nu (T) > 0$ for every $T\in B(X)$.


 For direct sums of Banach spaces, under some  general conditions it is shown that the numerical index of the sum is less or equal than the infimum of the numerical indices of the summands (see \cite{mm-jm-mp-br}, Theorem 2.1 and  Corollary 3.1). Furthermore, they also consider the numerical index of a Banach space which contains a dense increasing union of one-complemented subspaces and prove the following:
\begin{thm} (\cite{mm-jm-mp-br}, Theorem 5.1)
Let $X$ be a Banach space, let I be a directed set, and $\{X_i \,\, i\in I\}$  be an increasing family of one-complemented closed subspaces such that  $X=
\displaystyle\overline{ \bigcup_{ i \in I} X_i}$. Then,

$$ n(X) \geq\displaystyle \limsup_{i \in I} n(X_i).$$
 \end{thm}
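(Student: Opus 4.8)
The plan is to work from the operational characterization of the numerical index: $n(X)$ is the largest $k \ge 0$ with $k\|T\| \le \nu(T)$ for all $T \in B(X)$. Writing $\alpha = \limsup_{i \in I} n(X_i)$, it suffices to fix an arbitrary $T \in B(X)$ with $\|T\| = 1$ and show $\nu(T) \ge \alpha$; in fact it is enough to show $\nu(T) \ge \alpha - \varepsilon$ for each $\varepsilon > 0$ and then let $\varepsilon \to 0$. For each $i \in I$ I would fix a norm-one projection $P_i \in \mathcal{P}(X, X_i)$ (these exist since the $X_i$ are one-complemented) and set $T_i := P_i T|_{X_i} \in B(X_i)$, so that $\|T_i\| \le 1$ and, by definition of the numerical index, $\nu(T_i) \ge n(X_i)\|T_i\|$.

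The first key step is a monotonicity lemma: $\nu(T_i) \le \nu(T)$ for every $i$. To prove it, take $\lambda \in W(T_i)$, say $\lambda = x^*(T_i x)$ with $x \in S_{X_i}$, $x^* \in S_{X_i^*}$, $x^*(x) = 1$, and consider $\widetilde{x}^* := x^* \circ P_i \in X^*$. Since $\|P_i\| = 1$ one has $\|\widetilde{x}^*\| \le 1$, while $\widetilde{x}^*(x) = x^*(P_i x) = x^*(x) = 1$ because $x \in X_i$; hence $\|\widetilde{x}^*\| = 1$, and $x \in S_X$ satisfies $\widetilde{x}^*(x) = 1$. Then $\widetilde{x}^*(Tx) = x^*(P_i T x) = x^*(T_i x) = \lambda$, so $\lambda \in W(T)$. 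Thus $W(T_i) \subseteq W(T)$ and $\nu(T_i) \le \nu(T)$.

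The second key step, and the point where one must be careful, is that one cannot hope to find a single index $i$ at which simultaneously $n(X_i)$ is close to $\alpha$ and $\|T_i\|$ is close to $\|T\|$, since the intersection of two cofinal subsets of a directed set need not be cofinal. Instead I would fix $\varepsilon > 0$ and work inside the cofinal set $B_\varepsilon := \{\, i \in I : n(X_i) > \alpha - \varepsilon \,\}$, which is cofinal precisely because $\alpha = \limsup_i n(X_i)$. The claim is that $\sup_{i \in B_\varepsilon} \|T_i\| = 1$. To see this, given $\delta > 0$ pick $x \in S_X$ with $\|Tx\| > 1 - \delta$; by density of $\bigcup_i X_i$ choose $i_0$ and $y \in X_{i_0}$ with $\|x - y\| < \delta$, and choose $k$ and $z \in X_k$ with $\|Ty - z\| < \delta$. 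Using that $I$ is directed and $B_\varepsilon$ is cofinal, pick $j \in B_\varepsilon$ with $j \ge i_0$ and $j \ge k$; since the family is increasing, $y, z \in X_j$, hence $P_j y = y$, $P_j z = z$, and $\|P_j(Ty) - Ty\| = \|P_j(Ty - z) - (Ty - z)\| \le 2\delta$. Therefore $\|T_j y\| = \|P_j Ty\| \ge \|Ty\| - 2\delta \ge 1 - 4\delta$ while $\|y\| \le 1 + \delta$, so $\|T_j\| \ge (1 - 4\delta)/(1 + \delta)$ with $j \in B_\varepsilon$; letting $\delta \to 0$ proves the claim.

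Finally I would combine the two steps: for every $j \in B_\varepsilon$ we get $\nu(T) \ge \nu(T_j) \ge n(X_j)\|T_j\| > (\alpha - \varepsilon)\|T_j\|$, and taking the supremum over $j \in B_\varepsilon$ yields $\nu(T) \ge (\alpha - \varepsilon)\sup_{j \in B_\varepsilon}\|T_j\| = \alpha - \varepsilon$. Letting $\varepsilon \to 0$ gives $\nu(T) \ge \alpha$ for every $T \in S_{B(X)}$, that is, $n(X) \ge \limsup_{i \in I} n(X_i)$. The main obstacle is the cofinality subtlety in the directed set, resolved by running the density approximation argument entirely within $B_\varepsilon$; the monotonicity lemma and the norm estimate are then routine.
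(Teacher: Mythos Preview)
Your proof is correct. The paper does not supply its own argument for this theorem; it is quoted verbatim as Theorem~5.1 of \cite{mm-jm-mp-br} and then used as a black box (specifically, to furnish the inequality $n(X)\ge \limsup n(X_i)$ in the proofs of Theorems~\ref{main} and~\ref{main2}). So there is no ``paper's proof'' to compare against here.

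That said, your argument is the expected one and matches the approach in \cite{mm-jm-mp-br}: compress $T$ to $T_i=P_iT|_{X_i}$, lift functionals via $x^*\mapsto x^*\circ P_i$ to get $W(T_i)\subseteq W(T)$ and hence $\nu(T_i)\le \nu(T)$, and then use density of $\bigcup_i X_i$ to push $\|T_i\|$ up to $\|T\|$ along a suitable subnet. Your explicit handling of the cofinality issue---running the approximation inside the cofinal set $B_\varepsilon=\{i:n(X_i)>\alpha-\varepsilon\}$ rather than trying to intersect two cofinal sets---is exactly the care required for a general directed index set, and the $\delta$--bookkeeping is clean. Nothing is missing.
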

 
 Later, in \cite {AL}, it is proved that the liminf of the numerical index of the summands is greater  than or equal to the numerical index of the sum if the Banach space satisfies a condition called the local characterization condition (LCC) or a condition called the global characterization condition (GCC).
 We show if a norm on $X$ satisfies the local characterization condition, then
$$
n(X) = \displaystyle\lim_m n(X_m)
$$
and
$$
n(X) = \displaystyle\lim_{s\in S} n(X_s)
$$
where $S$ is any directed, infinite set and $X$ satisfies the (GCC).
We also provide examples of spaces where (LCC) or (GCC) is satisfied.

The following theorem, which is a direct consequence of (\cite{pal}, Theorem 2.5), plays a crucial role in our further investigations.
\begin{thm}
\label{palacios}
Let $X$ be a Banach space over $\mathbb{R}$ or $\mathbb{C}$ and let
$$
\Pi(X) = \{ (x, x^*) \in S_X \times S_{X^*} : x^*(x) = 1 \}.
$$
Denote by $\pi_1$ the natural projection from $\Pi(X)$ onto $S_X$ defined by $ \pi_1(x,x^*) = x.$  Fix a set $ \Gamma \subset \Pi(X)$
such that $ \pi_1(\Gamma)$ is dense in $S_X.$ Then for any $ T \in B(X),$
$$
\nu(T) = \sup\{ |x^*(Tx)|: (x, x^*) \in \Gamma \}.
$$
\end{thm}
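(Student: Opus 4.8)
The plan is to derive this from (\cite{pal}, Theorem 2.5), which asserts that the numerical radius $\nu(T)$ can be computed by taking the supremum of $|x^*(Tx)|$ over any subset $\Lambda \subset \Pi(X)$ that is \emph{norming} for $X$ in an appropriate sense — more precisely, over any $\Lambda$ whose closure in a suitable topology recovers all of $\Pi(X)$, or over any $\Lambda$ for which $\pi_1(\Lambda)$ is dense in $S_X$ together with the fact that for each such $x$ the attaining functionals are (weak-$*$) approximated. So the first step is to fix $T \in B(X)$ and $\varepsilon > 0$, and recall the standard fact that $\nu(T) = \sup\{|x^*(Tx)| : (x,x^*) \in \Pi(X)\}$ (this is the Bonsall--Duncan reformulation of the numerical range, valid since $W(T)$ as defined via $S_X \times S_{X^*}$ with $x^*(x)=1$ has the same supremum modulus). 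Thus there exists $(x_0, x_0^*) \in \Pi(X)$ with $|x_0^*(Tx_0)| > \nu(T) - \varepsilon$.

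Next I would use the density of $\pi_1(\Gamma)$ in $S_X$ to pick $(x,x^*) \in \Gamma$ with $\|x - x_0\|$ small; then $|x^*(Tx)|$ is close to $|x^*(Tx_0)|$ by continuity of $T$, but the remaining issue is that $x^*$ need not be close to $x_0^*$, so $|x^*(Tx_0)|$ need not be close to $|x_0^*(Tx_0)|$. This is exactly the subtlety that (\cite{pal}, Theorem 2.5) is designed to handle: the key input from that theorem is that one does \emph{not} need control of the second coordinate, because the supremum of $|x^*(Tx)|$ over pairs with $x$ ranging through a dense subset of $S_X$ and $x^*$ an arbitrary attaining functional still equals $\nu(T)$. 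Concretely, I would invoke Palacios's result in the form: if $D \subset S_X$ is dense and for each $x \in D$ we have chosen \emph{some} $x^* \in S_{X^*}$ with $x^*(x) = 1$, then $\nu(T) = \sup\{|x^*(Tx)| : x \in D\}$. Applying this with $D = \pi_1(\Gamma)$ and, for each $x \in D$, a choice of $x^*$ with $(x,x^*) \in \Gamma$, gives $\nu(T) \le \sup\{|x^*(Tx)| : (x,x^*) \in \Gamma\}$.

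For the reverse inequality, note that every $(x,x^*) \in \Gamma \subset \Pi(X)$ contributes $|x^*(Tx)| \in |W(T)|$, so trivially $\sup\{|x^*(Tx)| : (x,x^*) \in \Gamma\} \le \nu(T)$. Combining the two inequalities yields the claimed equality $\nu(T) = \sup\{|x^*(Tx)| : (x,x^*) \in \Gamma\}$ for every $T \in B(X)$.

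The main obstacle is the first step: justifying that the second coordinate $x^*$ can be left uncontrolled. A naive density argument in $\Pi(X)$ with the product topology fails because the map $x \mapsto x^*$ (attaining functionals) is not continuous, and indeed $\Pi(X)$ need not be closed in any convenient sense. The resolution is precisely the content of (\cite{pal}, Theorem 2.5), whose proof uses a perturbation/renorming or a slicing argument to show that perturbing $x$ slightly while keeping an attaining functional changes $|x^*(Tx)|$ by a controlled amount \emph{on average} or up to $\varepsilon$; since we are told we may assume that theorem, the work here reduces to correctly identifying the hypothesis ``$\pi_1(\Gamma)$ dense in $S_X$'' as the special case of Palacios's more general norming hypothesis, and then assembling the two easy inequalities above.
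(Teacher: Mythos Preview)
Your proposal is correct and matches the paper's approach: the paper offers no proof of this theorem at all, presenting it simply as ``a direct consequence of (\cite{pal}, Theorem~2.5)'', and your argument is exactly the unpacking of that citation --- the trivial inequality $\sup_{\Gamma} \le \nu(T)$ from $\Gamma \subset \Pi(X)$, and the nontrivial inequality $\nu(T) \le \sup_{\Gamma}$ supplied by Palacios's result that density of $\pi_1(\Gamma)$ in $S_X$ suffices.
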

Applying the above theorem we can prove:
\begin{cor}
\label{crucial}
Let $ X$ be an infinite-dimensional Banach space and let $ Y \subseteq X$ be its linear subspace whose norm-closure is equal to $X.$
Define for $ L \in \mathcal{L}(X),$
\begin{equation}
\nu_Y(L) = \sup \{ |x^*Lx| : x^* \in S_{X^*}, x \in S_Y, x^*(y)=1\}.
\end{equation}
Then $ \nu(L) = \nu_Y(L).$
\end{cor}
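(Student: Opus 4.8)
The plan is to obtain the corollary as an immediate application of Theorem~\ref{palacios} with a suitably chosen set $\Gamma\subseteq\Pi(X)$. Take
$$
\Gamma=\{(x,x^{*})\in\Pi(X):x\in S_{Y}\}=\{(x,x^{*}):x\in S_{Y},\ x^{*}\in S_{X^{*}},\ x^{*}(x)=1\}.
$$
By the very definition of $\nu$ one always has $\nu_Y(L)\le\nu(L)$, since $\Gamma\subseteq\Pi(X)$; the content is the reverse inequality, and this is exactly what Theorem~\ref{palacios} supplies once we know that $\pi_1(\Gamma)$ is dense in $S_X$. Granting that, the theorem yields $\nu(L)=\sup\{|x^{*}(Lx)|:(x,x^{*})\in\Gamma\}=\nu_Y(L)$ for every $L\in\mathcal{L}(X)$, which is the assertion.

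So the whole proof reduces to checking that $\pi_1(\Gamma)=S_Y$ is dense in $S_X$. First I would note $\pi_1(\Gamma)=S_Y$: the inclusion $\pi_1(\Gamma)\subseteq S_Y$ is clear, and conversely any $x\in S_Y\subseteq S_X$ has, by the Hahn--Banach theorem, a norming functional $x^{*}\in S_{X^{*}}$ with $x^{*}(x)=\|x\|=1$, so $(x,x^{*})\in\Gamma$. Next I would use the hypothesis $\overline{Y}=X$ to see that $S_Y$ is dense in $S_X$: given $x\in S_X$ and $\varepsilon\in(0,1)$, pick $y_0\in Y$ with $\|x-y_0\|<\varepsilon/2$, so that $\|y_0\|\ge 1-\varepsilon/2>0$ and $y:=y_0/\|y_0\|\in S_Y$ is well defined, and then
$$
\|x-y\|\le\|x-y_0\|+\Bigl\|y_0-\tfrac{y_0}{\|y_0\|}\Bigr\|=\|x-y_0\|+\bigl|\,\|y_0\|-1\,\bigr|\le 2\|x-y_0\|<\varepsilon .
$$
This establishes the density and finishes the argument.

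There is essentially no serious obstacle here: the only mildly delicate point is the normalization step above, where one must check that a vector of $Y$ lying close to $S_X$ can be rescaled onto $S_X$ while staying within a controlled neighbourhood of the original vector; the estimate $\bigl|\,\|y_0\|-1\,\bigr|=\bigl|\,\|y_0\|-\|x\|\,\bigr|\le\|y_0-x\|$ handles this. Everything else is a direct appeal to Theorem~\ref{palacios} and to Hahn--Banach. (We read the constraint in the displayed definition of $\nu_Y(L)$ as $x^{*}(x)=1$, which is the way it is used in the proof.)
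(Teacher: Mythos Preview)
Your proof is correct and follows exactly the approach indicated in the paper, which merely states ``Applying the above theorem we can prove'' and leaves the details to the reader. Your explicit verification that $\pi_1(\Gamma)=S_Y$ via Hahn--Banach and that $S_Y$ is dense in $S_X$ via the normalization estimate fills in precisely the routine details the paper omits, and your reading of the constraint as $x^{*}(x)=1$ is the intended one.
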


\begin{defi}
\label{defLCC}
Let $X$ be a Banach space and $X_1\subset X_2\subset \cdots \subset X$ be its subspaces such that $X=
\displaystyle\overline{ \bigcup_{m=1}^{\infty}X_m}$.
Suppose for any $m\in \mathbb{N}$ there exists $P_m \in \mathcal{P}(X_{m+1}, X_m)$ with $\|P_m \| = 1$. We say the norm on $X$, $\parallel . \parallel_{X}$ satisfies the \emph{Local Characterization Condition} (LCC) with respect to $ \{ P_m\})_{m=1}^{\infty}$ if and only if for any $ m \in \mathbb{N}$ there exists  $D_m$ a dense subset of $S_{X_{m+1}}$ such that for any $x \in D_m$ there exists $x^* \in S_{X_{m+1}^*}$ a norming functional for $x$ in $X_{m+1}^*$  and a constant $b_m(x)\in \mathbb{R}_+$ such that $b_m(x) x^*\mid _{X_m}$ is a norming functional for $P_m x$ in $X_m^*.$ (In fact, if $ P_m(x) \neq 0, $ then $ b_m(x) = \|P_m(x)\|/x^*(P_m(x)).)$
\end{defi}


Now we present an example of a Banach space $X$ satisfying the (LCC) given in Definition  (\ref{defLCC}).
\begin{exm}
\label{main1}
Let for $ n \in \mathbb{N}$ $(Y_n, \| \cdot \|_n)$ be a Banach space. Set $X_1 = Y_1$ and $X_n = X_{n-1} \oplus Y_n.$
Let for $ n \in \mathbb{N},$ let $ p_n \in [1, \infty).$
Define a norm $| \cdot |_1$ on $X_1$ by $ |x|_1 = \|x\|_1$ and a norm $ |\cdot|_2$ on $X_2$ by
$$
|(x_1,x_2)|_2 = (\|x_1\|_1^{p_1}+ \|x_2\|_2^{p_1})^{1/p_1},
$$
where $x_i \in Y_i$ for $i=1,2.$ Then, having defined $ |\cdot |_n$ for $x=(x_1,...,x_n) \in X_n$, we can define $ | \cdot |_{n+1}$ on
$X_{n+1}$ by
$$
| (x,x_{n+1})|_{n+1} = (|x|_n^{p_n}+\|x_{n+1}\|_{n+1}^{p_n})^{1/p_n}.
$$
Note that if $x \in X_n,$ and $ m\geq n,$ then $ |x|_m = |x|_n.$
Let
$$
F= \{ \{y_n\}: y_n \in Y_n \hbox{ and } y_n = 0 \hbox{ whenever } n \geq m \mbox{ depending on } \{y_n\} \}.
$$
One can identify $F$ with $ \bigcup_{n=1}^{\infty} X_n,$ thus enabling us to define, for $x \in F,$ its norm as:
$$
\|x\|_F = \lim_n |x|_n,
$$
because for fixed $x \in F$ the sequence $ |x|_n$ is constant from some point on by the above mentioned property.
Notice that the completion of $ F$ (we will denote it by $X$) is equal to the space of all sequences $\{x_n\}$ such that  $x_n \in X_n$ and
$$
 \lim_n \|Q_nx\|_F = \sup_n \|Q_nx\|_F < +\infty ,
$$
where for $n \in \mathbb{N}$ and $ x=(x_1,x_2,...)$
$$
Q_n(x) = (x_1,...,x_n,0,...).
$$
Indeed, let $ \{x^s\}$ be a Cauchy sequence in $X.$ Notice that by definition of $ \| \cdot \|_F,$ $\|Q_n|_X\| =1.$ Hence for any
$ \epsilon >0,$ there exists $ N \in \mathbb{N}$ such that for any $s,k \geq N$ and $ n \in \mathbb{N},$
$$
|Q_n(x^s-x^k)\|_n \leq \epsilon.
$$
Consequently, for any $ n \in \mathbb{N},$ $Q_n(x^s)$ converges to some point in $ X_n.$ Hence for any $ i \in \mathbb{N}$
$ (x^s)_i \rightarrow x_i \in Y_i.$  Set $x=(x_1,x_2,...).$ Then, it is easy to see that $x \in X, $ since any Cauchy sequence is bounded and
$$
\|Q_n(x)\|_F = \lim_s\|Q_n(x^s)\|_F \leq \sup_s \|x^s\|_F < +\infty .
$$
Moreover, for fixed $\epsilon >0,$ for $s,k \geq N$ and any $ n \in \mathbb{N}, $
$$
\| Q_n(x^k - x^s)\|_F  \leq  \| x^s-x^k\|_F \leq  \epsilon.
$$
Hence fixing $ k \geq N$ and taking limit over s we get  for any $ n \in \mathbb{N},$
$$
\| Q_n(x^k -x)\|_F \leq \epsilon,
$$
and consequently $ \|x-x^k\|_X \leq \epsilon $ for $ k \geq N, $ which shows that $ \{ x^k\}$ converges to $x \in X.$
Hence $ X$ is a Banach space. Since for any $ x \in X,$ $ \lim_n\|Q_n(x)-x\|=0,$  $F $ is a dense subset of $X.$
Note that, for any $ n \in \mathbb{N}$, a map $P_{n}:X_{n+1} \rightarrow  X_n$ given by
$$
P_{n}(x_1,...,x_n,x_{n+1}) = (x_1,...,x_n,0),
$$
is a linear projection of norm one. By Definition (\ref{defLCC}) and the proof from Example (\ref{exCC}), the (LCC) is satisfied for the norm on $X.$
\end{exm}

\begin{defi}
\label{defGCC}
Let $X$ be a Banach space and let $\{ X_s\}_{s \in S}$ be a family of subspaces of $X$ such that $X=
\displaystyle\overline{ \bigcup_{s \in S} X_s}$. Assume that for any $s_1, s_2 \in S$ there exists $s_3 \in S$ such that $
X_{s_1} \cup X_{s_2} \subset X_{s_3}, $ i.e.  the family $\{ X_s\}_{s \in S}$ forms a directed set.
Suppose for any $s\in S$ there exists $P_s \in \mathcal{P}(X, X_s)$ with $\|P_s \| = 1$. We say the norm on $X$, $\parallel . \parallel_{X}$ satisfies the \emph{Global Characterization Condition} (GCC) with respect to $ \{ P_s\})_{s \in S}$ if and only if for any $ s \in S$ there exists  $D_s$ a dense subset of $S_{X}$ such that for any $x \in D_s$ there exists $x^* \in S_{X^*}$ a norming functional for $x$ in $X$  and a constant $b_m(x)\in \mathbb{R}_+$ such that $b_s(x) x^*\mid _{X_s}$ is a norming functional for $P_s x$ in $X_s^*.$ (In fact, if $ P_s(x) \neq 0, $ then $ b_s(x) = \|P_s(x)\|/x^*(P_s(x)).)$
\end{defi}
%

Now we present an example of a Banach space $X$ satisfying the condition (GCC) given in Definition(\ref{defGCC}).
\begin{exm}
\label{exCC}
Let $S$ be a directed and infinite set.
Fix $p \in [1,\infty).$ Let $X^p =(\displaystyle \oplus_{s \in S} X_s)_p$ be the direct, generalized $l^p$-sum of Banach spaces $(X_s, \parallel . \parallel_s)_{s \in S}$, defined as
$$
X^p  = \{ (x_s)_{s \in S}: x_s \in X_s, card(\supp((x_s)_{s \in S})) \leq \aleph_o \mbox{ and } \displaystyle\sum_{s \in S} ( \parallel x_s \parallel_s)^p < \infty \},
$$
where $\supp((x_s)_{s \in S})= \{ s \in X_s : x_s \neq 0\}.$
Clearly, the norm of  $x \in X^p$ is
$$
\parallel x \parallel =  (\displaystyle\sum_{s \in S} ( \parallel x_s \parallel_s)^p)^{1/p}
$$
and in case $ S = \mathbb{N}$ and  $X_i=X$
 for all $i \in \mathbb{N}$, $X^p= \ell_p(X)$.
Fix any finite set $W  \subset S.$
Next, consider spaces $Z_W =   \oplus_{s \in W} X_s$ and the projections
$$
P_W((x_s)_{s \in S}) = (z_s)_{s \in S},
$$
where $z_s = x_s$ for $s \in W$ and $ z_s =0$ otherwise. Let $ F = \{ W \subset S: card((W) < \infty \}.$
Now we show that the (GCC) is satisfied for $X$ ,$\{ Z_W \}_{W \in F}$ and $\{ P_W \}_{W \in F}$. It is obvious that $ \| P_W\| =1$ for any finite subset $W$ of $S$ and
$p\in [1, \infty).$ Now assume that $ 1 < p < \infty.$ To show that the characterization condition is satisfied for the norm on $X$, note that for any $x \in X^p\setminus\{0\} $ there exists a norming functional of the form
$$
x^* = \displaystyle \frac{\left( \parallel x_s \parallel_s^{p-1} x_s^* ( . )\right )_{s \in S}}{\displaystyle(\sum_{s \in S} ( \parallel x_s \parallel_s)^p)^{\frac{p-1}{p}}}
$$
where $x_s^* \in X_s^*$ is a norming functional for $x_s \in X_s$. Setting $C = \displaystyle(\sum_{s \in S} ( \parallel x_s \parallel_s)^p) ^{\frac{p-1}{p}}$, to see that  $\parallel x^* \parallel \leq 1$, let $y\in X$ be an element with $\parallel y \parallel =1$, then
$$
|x^* (y)| = | \displaystyle \frac{\sum_{s \in S} \left( \parallel x_s \parallel_s\right)^{p-1} x_s^*(y_s)}{C}| \leq  \frac{1}{C}\displaystyle\sum_{s \in S}
 \left( \parallel x_s \parallel_s)^{p-1}\right) |x_s^*(y_s)|.
$$
Applying the H\"{o}lder inequality with conjugate pairs $p$ and $q$:
$$ |x^* (y)| \leq \frac{1}{C} \left[\displaystyle (\sum _{s \in S} \parallel x_s\parallel_{s}^{p-1})^q\right]^{\frac{1}{q}} . \left[\displaystyle\sum_{s \in S}\parallel y_s \parallel_s^p\right]^{\frac{1}{p}}.
$$
Since $q =\displaystyle \frac{p}{p-1}$ and $\parallel y \parallel =\left[\displaystyle\sum_{s \in S}\parallel y_s \parallel_s^p\right]^{\frac{1}{p}} = 1$ we have $|x^*(y)| \leq 1$.
It is easy to see that $x^*$ is a norming functional for $x$ because
$$
x^* (x)= \frac{1}{C}\displaystyle \sum_{s \in S} \parallel x_s \parallel_{s}^{p-1} x_s^*(x)= \displaystyle \frac{\parallel x \parallel^p}{\parallel x \parallel^{p-1}}= 1.
$$
Furthermore, if $ P_W \neq 0,$ from
$$ (P_Wx)^* = \displaystyle\frac{\left( \parallel x_w \parallel_w^{p-1} x_w^*(.)\right)_{w \in W}}{ \left(\displaystyle \sum_{w\in W} \parallel x_w \parallel_w^p \right)^{\frac{p-1}{p}}}
$$
and  writing $x^*\mid_{Z_W}$,  we obtain that $b_W(x)= \displaystyle \frac{\parallel x \parallel^{p-1}}{\parallel P_W x\parallel^{p-1}}$.
If $p=1$ then for any $x \in X^1\setminus\{0\} $ there exists a norming functional of the form $(x_s^* ( . ))_{s \in S}$,  where $x_s^* \in X_s^*$ is a norming functional for $x_s \in X_s.$ It is easy to see that
$$
 \|(x_s^* ( . ))_{s \in S}\| = \sup_{s \in S} \|x_s^*\|.
$$
Reasoning as in the previous case we get that (GCC) is satisfied for $p=1.$
\end{exm}

\begin{rem}
Note that if $ S = \mathbb{N}$ and $ X_s \subset X_z$ for $ s, z \in \mathbb{N},$ $ s \leq z$ then the  (GCC) implies the  (LCC).
\end{rem}
The above definition is motivated by the space $X= \ell_p$ with $1< p < \infty,$ $ X_m =\ell_p^{(m)}$ and a sequence of projections
$\{ P_m\}_{m=1}^{\infty}$ defined by
$$
P_m (x_1,...,x_m,x_{m+1},...) = (x_1,...,x_m,0,...).
$$
For $x\neq 0$ and $x \in \ell_p$,
the form of the norming functional is $x^* = \displaystyle \frac{(|x_i|^{p-1} sgn(x_i))}{\|x\|_{p}^{p-1}}$ and clearly
$$
x^*\mid_{X_m} = \displaystyle \frac{(|x_i|^{p-1} sgn(x_i))}{\|x\|_{p}^{p-1}} \,\,\,\mbox{where}\,\,\,i\in\{1,2,\ldots ,m\}
$$
and the norming functional for $P_mx$, $(P_mx)^*$ takes the form
$$
(P_m x)^* = \displaystyle \frac{(|x_i|^{p-1} sgn(x_i))}{\|P_m x\|_{p}^{p-1}}\,\,\,\mbox{where}\,\,\, b_m(x) =\displaystyle \frac{\|x\|_{p}^{p-1}}{\|P_m x\|_{p}^{p-1}} .
$$
The above (GCC) is also satisfied for norms of $\ell_1$ and $c_0$ (with the same sequence $\{ P_m\}_{m=1}^{\infty}$).

We start by investigating first some consequences of (LCC).
\begin{pro} \cite{AL} 
\label{one}
Let $X$ be a Banach space satisfying (LCC) with respect to $ \{ P_m\})_{m=1}^{\infty}.$
For a fixed $m\in \mathbb{N}$ and $L \in \mathcal{L}(X_m)$,define a sequence
$$
w_m(L)= \nu(L),\,\, w_{m+1}(L) = \nu(L\circ P_m), \ldots, w_{m+j} (L) = \nu (L \circ Q_{m,j}),
$$
where $Q_{m,j}= P_m \circ \ldots \circ P_{m+j-1}.$ (For $ j \geq 1$ $\nu (L \circ Q_{m,j})$ denote the numerical radius of $ L \circ Q_{m,j}$ with respect to  $X_{m+j}.)$
Then
$ \nu(L)=w_m(L)=w_{m+j} (L)$ for $j=1,2,\ldots$.
\end{pro}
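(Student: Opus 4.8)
The plan is to reduce everything to a single one-step identity and then induct on $j$. Writing $T_0 := L$, $Q_{m,0} := \mathrm{id}_{X_m}$ and, for $j \geq 1$, $T_j := L \circ Q_{m,j}$, observe that $Q_{m,j+1} = Q_{m,j}\circ P_{m+j}$, hence $T_{j+1} = T_j \circ P_{m+j}$, and that $T_j$ maps $X_{m+j}$ into $X_m \subseteq X_{m+j}$, so $T_j \in \mathcal{L}(X_{m+j})$. Since $w_m(L)=\nu(L)=\nu_{X_m}(T_0)$ by definition and $w_{m+j}(L)=\nu_{X_{m+j}}(T_j)$ for $j\geq 1$, the whole proposition will follow by induction on $j$ once we prove: for every $k\in\mathbb{N}$ and every $T\in\mathcal{L}(X_k)$,
$$
\nu_{X_{k+1}}(T\circ P_k) = \nu_{X_k}(T),
$$
where $\nu_{X_k}$, $\nu_{X_{k+1}}$ denote numerical radius with respect to $X_k$, $X_{k+1}$; one then applies this with $k=m,m+1,\dots$ and $T=T_j$.

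For the inequality $\nu_{X_{k+1}}(T\circ P_k)\geq \nu_{X_k}(T)$ I would fix an arbitrary pair $(u,\xi)\in\Pi(X_k)$, extend $\xi$ by the Hahn--Banach theorem to $\tilde\xi\in S_{X_{k+1}^*}$, and note that $(u,\tilde\xi)\in\Pi(X_{k+1})$ since the norm of $X_{k+1}$ restricts to that of $X_k$. Because $P_k u = u$ and $Tu\in X_k$, we get $\tilde\xi\big((T\circ P_k)u\big)=\xi(Tu)$, whence $|\xi(Tu)|\leq\nu_{X_{k+1}}(T\circ P_k)$; taking the supremum over $\Pi(X_k)$ gives the claim. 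This direction does not use the (LCC).

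The reverse inequality is the heart of the matter and is where the (LCC) is used. Let $D_k\subseteq S_{X_{k+1}}$ be the dense set supplied by Definition \ref{defLCC}, and for $x\in D_k$ let $x^*\in S_{X_{k+1}^*}$ be the associated norming functional and $b_k(x)\in\mathbb{R}_+$ the associated constant. Since $\pi_1\big(\{(x,x^*):x\in D_k\}\big)=D_k$ is dense in $S_{X_{k+1}}$, Theorem \ref{palacios} gives $\nu_{X_{k+1}}(T\circ P_k)=\sup_{x\in D_k}\big|x^*\big((T\circ P_k)x\big)\big|$, so it suffices to bound each term by $\nu_{X_k}(T)$. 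Fix $x\in D_k$. If $P_k x=0$ the term vanishes. If $P_k x\neq 0$, put $v=P_k x$, $u=v/\|v\|$ and $\xi=b_k(x)\,x^*|_{X_k}$; by the (LCC), $\xi\in S_{X_k^*}$ and $\xi(v)=\|v\|$, so $\xi(u)=1$ and $(u,\xi)\in\Pi(X_k)$, giving $|\xi(Tu)|\leq\nu_{X_k}(T)$ because $Tu\in X_k$. Using $T(P_k x)=\|v\|\,Tu$, then $x^*(Tu)=b_k(x)^{-1}\xi(Tu)$ (valid since $Tu\in X_k$), and finally $b_k(x)=\|v\|/x^*(v)$, one computes
$$
x^*\big((T\circ P_k)x\big) = x^*(P_k x)\,\xi(Tu),
$$
and since $|x^*(P_k x)|\leq\|x^*\|\,\|P_k\|\,\|x\|\leq 1$ we obtain $\big|x^*\big((T\circ P_k)x\big)\big|\leq\nu_{X_k}(T)$. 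Taking the supremum over $x\in D_k$ finishes the proof of the claim, and the induction over $j$ then yields $\nu(L)=w_m(L)=w_{m+j}(L)$ for all $j$.

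The main obstacle is not any individual estimate but making sure the data of Definition \ref{defLCC} slot exactly into Theorem \ref{palacios}: the functional $x^*$ attached to $x\in D_k$ must simultaneously lie in $S_{X_{k+1}^*}$ and norm $x$ (so that $(x,x^*)\in\Pi(X_{k+1})$ and Theorem \ref{palacios} is applicable), \emph{and}, after rescaling by $b_k(x)$, restrict to a norming functional of $P_k x$ in $X_k^*$ (so that $(u,\xi)\in\Pi(X_k)$); moreover the degenerate case $P_k x=0$, where $b_k(x)$ is undefined, must be handled separately. Once the one-step equality is secured, everything else is bookkeeping.
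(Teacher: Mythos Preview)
Your proof is correct and follows essentially the same route as the paper's: both arguments reduce to the one-step comparison $\nu_{X_{k+1}}(T\circ P_k)=\nu_{X_k}(T)$ and then induct on $j$; both obtain the $\leq$ direction by feeding the LCC data $(x,x^*)$ with $x\in D_k$ into Theorem~\ref{palacios} and rescaling via $b_k(x)$, and both obtain the $\geq$ direction from the inclusion $X_k\subset X_{k+1}$ together with a Hahn--Banach extension of the norming functional. The only differences are cosmetic: you state the one-step identity explicitly for an arbitrary $T\in\mathcal{L}(X_k)$, you make the Hahn--Banach step explicit (the paper just writes the monotonicity inequality), and you single out the degenerate case $P_k x=0$, which the paper leaves implicit.
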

\begin{proof}
Since $ X_m \subset X_{m+1}$ for any $m \in \mathbb{N},$ it is easy to see that $w_{m+j} (L)$ is an increasing sequence with respect to $j$, since
$$
w_{m+j}(L)= \sup\{ |x^* L \circ Q_{m,j} x |:\,\,x\in S_{X_{m+j}},\,\, x^*\in S_{X_{m+j}^*}, x^*(x)=1\}
$$
$$
\leq \sup\{ |x^* L Q_{m,j}P_{m+j} x |:\,\,x\in S_{X_{m+j+1}},\,\, x^*\in S_{X_{m+j+1}^*}, x^*(x)=1\} =w_{m+j+1}(L).
$$
Now we prove that $w_m = w_{m+1}.$ To do this for any $x \in D_m$ select $x^*_x  \in S_{X_{m+1}^*}$ satisfying the requirements of Definition(\ref{defLCC}).
Set
$$
 \Gamma_m = \{(x,x^*_x)\in \Pi(X): x \in D_m\} .
$$
Note that by Def.(\ref{defLCC}),
$$
\frac{b_m(x)}{\|P_m(x)\|} = \frac{1}{x^*(P_m(x))} \geq 1.
$$
Hence  for any $(x,x^*_x) \in \Gamma_m,$
$$
 | x^*_x \circ L \circ P_m x |  =| (x^*_x)|_{X_m}\circ  L \circ P_m x |  \leq \frac{b_m(x)}{\parallel P_mx \parallel }|(x^*_x)|_{X_m} \circ L
 \circ P_m x |
$$
$$
= |(b_m(x) x^*_x)|_{X_m} L (\frac{P_m x }{\parallel P_m x \parallel}) | \leq \nu(L).
$$
Notice that by Def.(\ref{defLCC}), $ \pi_1(\Gamma_m) = D_m$ and $D_m$ is dense in $ S_{X_{m+1}}.$ By Theorem (\ref{palacios})
applied to $ \Gamma_m$ and $ L \circ P_m,$
$$
w_{m+1}(L) = \nu(L\circ P_m) \leq \nu(L) = w_m(L)
$$
and thus $w_m(L)= w_{m+1}(L)$. Induction on $j$ results in $w_m(L)= w_{m+j}(L)$.
\end{proof}Next, we examine some properties of the numerical index when the norm of the space $X$ satisfies the LCC. Proof of the following proposition  is given in \cite{AL}.
\begin{pro}  The following hold true:
\renewcommand{\labelenumi}{\alph{enumi})}
\begin{enumerate}
\item 
\label{projections}
Let $P_j \in \mathcal{P}(X_{j+1}, X_j) $ with $\parallel P_j \parallel = 1$. For a fixed $m \in \mathbb{N}$, define projections $Q_{m,j} \in \mathcal{P}(X_{m+j}, X_m) $ as
$ Q_{m,j} = P_m \circ P_{m+1} \circ \cdots \circ P_{m+j-1}$. Then
$$
 \displaystyle \lim_{j \rightarrow\infty} Q_{m,j} = Q_m
$$
where $Q_m \in \mathcal{P}(X, X_m) $ with $\parallel Q_m \parallel = 1$ and $X= \displaystyle\overline{ \bigcup_{m=1}^{\infty}X_m}$.
\item
\label{inequality}
For a fixed $m\in \mathbb{N}$ and $L \in \mathcal{L}(X_m)$ we have
$$
w_{m+j} (L) \leq \nu(L \circ Q_m)
$$
for all $j,$ where $\nu(L \circ Q_m)$ denotes the numerical radius of $L \circ Q_m$ with respect to $X.$
\item
\label{interesting} Let $X$ satisfy (LCC) with respect to $ \{ P_m \}_{m=1}^{\infty}.$ Then
for any $ m \in \mathbb{N}$  and $L \in \mathcal{L}(X_m), $
$$
\nu(L) = \nu(L \circ Q_m),
$$
where $Q_m \in \mathcal{P}(X,X_m)$ are defined in Proposition(\ref{projections}).

\item 

\label{LCC}
Assume that $ \| \cdot \|_X$ satisfies (LCC). Then for any $ m \in \mathbb{N}, $
$$
n(X_m) \geq n(X).
$$

\end{enumerate}
\end{pro}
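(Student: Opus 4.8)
The plan is to deduce the inequality directly from the identity $\nu(L)=\nu(L\circ Q_m)$ already proved in part~(\ref{interesting}) of the previous proposition, combined with the variational ("greatest constant") description of the numerical index recalled in the introduction. First I would reduce to a pointwise statement about operators: since
$$
n(X_m)=\inf\{\nu(L) : L\in\mathcal{L}(X_m),\ \|L\|=1\},
$$
it suffices to show that $n(X)\le\nu(L)$ for every norm-one $L\in\mathcal{L}(X_m)$, for then taking the infimum over all such $L$ gives $n(X_m)\ge n(X)$.

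So fix $L\in\mathcal{L}(X_m)$ with $\|L\|=1$, let $Q_m\in\mathcal{P}(X,X_m)$ be the norm-one projection furnished by part~(\ref{projections}) (the strong limit of the $Q_{m,j}$), and set $\widetilde L=L\circ Q_m\in\mathcal{L}(X)$. I would first check that $\|\widetilde L\|=1$: the bound $\|\widetilde L\|\le\|L\|\,\|Q_m\|=1$ is immediate, while for $x\in S_{X_m}$ we have $Q_m x=x$, hence $\widetilde L x=Lx$, so $\|\widetilde L\|\ge\sup_{x\in S_{X_m}}\|Lx\|=\|L\|=1$. Now invoking the characterization $n(X)\,\|T\|\le\nu(T)$ valid for every $T\in\mathcal{L}(X)$, applied to $T=\widetilde L$, we get $n(X)=n(X)\,\|\widetilde L\|\le\nu(\widetilde L)$, where $\nu(\widetilde L)$ is the numerical radius computed in $\mathcal{L}(X)$. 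Finally, part~(\ref{interesting}) — this is the step where the (LCC) hypothesis is actually used, through Proposition~\ref{one}, Corollary~\ref{crucial} and Theorem~\ref{palacios} — gives $\nu(\widetilde L)=\nu(L\circ Q_m)=\nu(L)$, the latter being the numerical radius of $L$ in $\mathcal{L}(X_m)$. Combining, $n(X)\le\nu(L)$, and passing to the infimum over $L$ finishes the argument.

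The main point requiring care is not any genuine obstacle but bookkeeping: one must keep track of the ambient space in which each numerical radius is taken (so that the chain $n(X)\le\nu(\widetilde L)=\nu(L\circ Q_m)=\nu(L)$ legitimately connects a quantity attached to $X$ with one attached to $X_m$), and one must verify that post-composition with $Q_m$ does not decrease the operator norm below $1$. All the substantive work has already been carried out in parts~(\ref{projections}) and~(\ref{interesting}), so once those are in hand the present statement is essentially a one-line consequence.
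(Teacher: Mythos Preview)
Your argument for part~(d) is correct: lifting $L\in S_{\mathcal{L}(X_m)}$ to $\widetilde L=L\circ Q_m\in\mathcal{L}(X)$, checking $\|\widetilde L\|=1$ via $Q_m|_{X_m}=\mathrm{id}$, and then invoking part~(c) to convert the numerical radius of $\widetilde L$ in $X$ into $\nu(L)$ in $X_m$ is exactly the right route, and the care you take in tracking which ambient space each $\nu$ lives in is appropriate. The paper itself does not supply a proof of this proposition---it simply refers to \cite{AL}---so there is no in-text argument to compare against; but what you wrote is the natural (indeed essentially forced) deduction of (d) from (a) and (c), and is almost certainly what \cite{AL} does. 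One minor wording slip: you refer to part~(c) as belonging to ``the previous proposition,'' but it is part of the \emph{same} proposition you are proving; otherwise the bookkeeping is fine.
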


\begin{thm} \cite{AL}
\label{main}
Let $X$ and $X_m $ and $ P_m$ be as in Definition (\ref{defLCC}). Then
$$
n(X) = \displaystyle \lim_m n(X_m).
$$
\end{thm}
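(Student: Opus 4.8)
The plan is to establish the two-sided estimate $\limsup_m n(X_m)\le n(X)\le\liminf_m n(X_m)$; since $\liminf_m n(X_m)\le\limsup_m n(X_m)$ always, this forces $\lim_m n(X_m)$ to exist and to equal $n(X)$. The right-hand inequality is immediate: part (d) of the preceding Proposition states that $n(X_m)\ge n(X)$ for every $m$ (this is the one place the hypothesis (LCC) is used), hence $n(X)\le\liminf_m n(X_m)$. In particular no monotonicity of the sequence $\big(n(X_m)\big)_m$ is needed.

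For the left-hand inequality, recall that $n(X)=\inf\{\nu(T):T\in B(X),\ \|T\|=1\}$, so it suffices to fix $T\in B(X)$ with $\|T\|=1$ and show $\limsup_m n(X_m)\le\nu(T)$. Let $Q_m\in\mathcal{P}(X,X_m)$ be the norm-one projections furnished by part (a) of the preceding Proposition, let $\iota_m:X_m\hookrightarrow X$ be the inclusion, and put $L_m:=Q_m\circ T\circ\iota_m\in\mathcal{L}(X_m)$. The key point is that $\nu_{X_m}(L_m)\le\nu_X(T)$ for every $m$. To see this, take any $(y,y^*)\in\Pi(X_m)$ and set $\phi:=y^*\circ Q_m\in X^*$. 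Since $y\in X_m$ and $Q_m$ is a projection onto $X_m$ we have $Q_m y=y$, so $\phi(y)=y^*(y)=1$; combined with $\|\phi\|\le\|y^*\|\,\|Q_m\|=1$ this gives $\|\phi\|=1$, i.e. $(y,\phi)\in\Pi(X)$. Moreover $y^*(L_m y)=y^*(Q_m T y)=\phi(Ty)\in W_X(T)$, and taking the supremum over $\Pi(X_m)$ gives $\nu_{X_m}(L_m)\le\nu_X(T)$.

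Next I would verify that $\|L_m\|\to\|T\|=1$. Clearly $\|L_m\|\le\|Q_m\|\,\|T\|\,\|\iota_m\|=1$. For the reverse, note that $Q_m\to I_X$ strongly as $m\to\infty$: each $Q_m$ fixes $X_k$ pointwise once $m\ge k$, all the $Q_m$ have norm $1$, and $\bigcup_k X_k$ is dense in $X$. Hence for any $x\in\big(\bigcup_k X_k\big)\cap S_X$ and all $m$ large enough that $x\in X_m$, one has $L_m x=Q_m(Tx)\to Tx$, so $\liminf_m\|L_m\|\ge\|Tx\|$; as such $x$ are dense in $S_X$ and $z\mapsto\|Tz\|$ is continuous, $\liminf_m\|L_m\|\ge\|T\|=1$. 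Putting the pieces together, $n(X_m)\,\|L_m\|\le\nu_{X_m}(L_m)\le\nu_X(T)$ for all $m$; letting $m\to\infty$ and using $\|L_m\|\to1$ yields $\limsup_m n(X_m)\le\nu(T)$. Since $T$ was an arbitrary norm-one operator, $\limsup_m n(X_m)\le n(X)$, which together with the first paragraph gives $\lim_m n(X_m)=n(X)$.

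The point I expect to be delicate is the bookkeeping rather than any single estimate: one has to recognize that all of the real content---the interplay of (LCC) with norming functionals---is already packaged in part (d), after which the reverse inequality is a soft argument resting only on the existence of norm-one projections onto the $X_m$ and their strong convergence to the identity. The one genuinely quantitative step is the convergence $\|L_m\|\to\|T\|$; the numerical-radius comparison $\nu_{X_m}(L_m)\le\nu_X(T)$ costs nothing, because $y^*\circ Q_m$ automatically norms $y$ whenever $y\in X_m$. (Incidentally, routing this comparison through $\nu_{X_m}(L_m)=\nu_X(Q_m T Q_m)$---available from Proposition \ref{one}---would instead demand $\nu_X(Q_m T Q_m)\le\nu_X(T)$, which is less transparent, so I would avoid it.)
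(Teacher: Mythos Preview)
Your proof is correct. The inequality $\liminf_m n(X_m)\ge n(X)$ is handled exactly as in the paper, by invoking part (d) of the preceding Proposition.

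For the reverse inequality $n(X)\ge\limsup_m n(X_m)$, however, the paper simply quotes Theorem~\ref{palacios}---sorry, Theorem~5.1 of \cite{mm-jm-mp-br} (stated earlier in the text as Theorem~2.2)---whereas you supply a direct argument from scratch. Your argument is in fact a self-contained proof of that cited theorem in the present setting: given norm-one projections $Q_m$ onto an increasing dense union, compress an arbitrary $T\in S_{B(X)}$ to $L_m=Q_mT|_{X_m}$, observe that $y^*\circ Q_m$ norms $y$ whenever $y\in X_m$ so that $\nu_{X_m}(L_m)\le\nu_X(T)$, and use strong convergence $Q_m\to I_X$ to get $\|L_m\|\to 1$. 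This is exactly the mechanism behind the cited result. The trade-off is clear: the paper's route is two lines long but relies on an external reference, while your route is elementary and self-contained but recapitulates a known argument. Both are legitimate; your version has the small expository advantage of making visible that the $\limsup$ inequality needs only the norm-one projections $Q_m$ (not the full LCC), which you correctly point out in your final paragraph.
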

\begin{proof}
By  the above Proposition, part d) we have,  $ n(X_m) \geq n(X)$ for any $ m \in \mathbb{N}.$ Hence,
$$
\liminf_m n(X_m) \geq n(X).
$$
By Theorem 5.1 of \cite{mm-jm-mp-br}, we already know that
$$
n(X) \geq \limsup_m n(X_m),
$$
which proves the equality.
\end{proof}
Now we present our main theorem.
\begin{thm} \cite{AL}
\label{main2}
Let $X$ and $X_s $ and $ P_s$ be as in Definition (\ref{defGCC}). Then
$$
n(X) = \displaystyle \lim_s n(X_s).
$$
\end{thm}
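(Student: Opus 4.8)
The plan is to follow the pattern of the proof of Theorem \ref{main}: establish the two one-sided estimates
$$\liminf_{s\in S} n(X_s) \ge n(X) \qquad\text{and}\qquad \limsup_{s\in S} n(X_s) \le n(X),$$
and observe that for a net indexed by a directed set one always has $\liminf\le\limsup$, so these two bounds squeeze the net $\bigl(n(X_s)\bigr)_{s\in S}$ to the common value $n(X)$, which is exactly the assertion $n(X)=\lim_s n(X_s)$.

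The heart of the matter is the lower estimate, and for it I would prove that $n(X_s)\ge n(X)$ for every single $s\in S$. This reduces, just as in the (LCC) case, to the identity $\nu_{X_s}(L)=\nu_X(L\circ P_s)$ for every $L\in\mathcal{L}(X_s)$, where $\nu_X$ is the numerical radius in $B(X)$ and $\nu_{X_s}$ that in $B(X_s)$. The inequality ``$\le$'' is elementary: given $(x,x^*)\in\Pi(X_s)$, a Hahn--Banach extension $\tilde x^*\in S_{X^*}$ of $x^*$ gives $(x,\tilde x^*)\in\Pi(X)$ with $\tilde x^*(L\circ P_s\,x)=\tilde x^*(Lx)=x^*(Lx)$, since $P_sx=x$ and $Lx\in X_s$; taking suprema yields $\nu_{X_s}(L)\le\nu_X(L\circ P_s)$. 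For ``$\ge$'' I would invoke the (GCC) together with Theorem \ref{palacios}. Fix $s\in S$; for each $x\in D_s$ choose a norming functional $x^*_x\in S_{X^*}$ and a constant $b_s(x)>0$ as in Definition \ref{defGCC}, and set $\Gamma_s=\{(x,x^*_x):x\in D_s\}\subset\Pi(X)$. Since $\pi_1(\Gamma_s)=D_s$ is dense in $S_X$, Theorem \ref{palacios} gives $\nu_X(L\circ P_s)=\sup\{|x^*_x(L\circ P_s\,x)|:x\in D_s\}$. For $x\in D_s$ with $P_sx\ne0$ one has $b_s(x)/\|P_sx\|=1/x^*_x(P_sx)\ge1$ (because $x^*_x(P_sx)\le\|P_sx\|\le1$), hence
$$
|x^*_x(L\circ P_s\,x)| = \bigl|\,(x^*_x)|_{X_s}\bigl(L\circ P_s\,x\bigr)\bigr| \le \frac{b_s(x)}{\|P_sx\|}\bigl|\,(x^*_x)|_{X_s}\bigl(L\circ P_s\,x\bigr)\bigr| = \Bigl|\,b_s(x)(x^*_x)|_{X_s}\Bigl(L\bigl(P_sx/\|P_sx\|\bigr)\Bigr)\Bigr| \le \nu_{X_s}(L),
$$
the last step using that $b_s(x)(x^*_x)|_{X_s}$ is a norming functional for $P_sx$ in $X_s^*$, so $\bigl(P_sx/\|P_sx\|,\,b_s(x)(x^*_x)|_{X_s}\bigr)\in\Pi(X_s)$; the case $P_sx=0$ contributes only $0\le\nu_{X_s}(L)$. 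Taking the supremum over $x\in D_s$ gives $\nu_X(L\circ P_s)\le\nu_{X_s}(L)$, completing the identity.

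With the identity in hand, and since $\|L\circ P_s\|_X=\|L\|_{X_s}$ (because $\|P_s\|=1$ and $P_s$ acts as the identity on $X_s$), the characterization of the numerical index recalled in the introduction gives
$$n(X)\,\|L\|_{X_s} = n(X)\,\|L\circ P_s\|_X \le \nu_X(L\circ P_s) = \nu_{X_s}(L)$$
for all $L\in\mathcal{L}(X_s)$, whence $n(X_s)\ge n(X)$; as $s$ was arbitrary, $\liminf_{s\in S} n(X_s)\ge n(X)$. For the upper estimate I would apply Theorem 5.1 of \cite{mm-jm-mp-br}: under the (GCC) each $X_s$ is the range of the bounded projection $P_s$, hence a one-complemented closed subspace, and defining $s_1\preceq s_2$ iff $X_{s_1}\subseteq X_{s_2}$ turns $S$ into a directed set for which $\{X_s\}_{s\in S}$ is an increasing family of one-complemented closed subspaces with $X=\overline{\bigcup_{s\in S}X_s}$ --- exactly the hypothesis of that theorem --- so $n(X)\ge\limsup_{s\in S} n(X_s)$. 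Combining the two estimates, together with the remark that $\liminf\le\limsup$ for nets, finishes the proof. I expect the only genuinely delicate point to be the ``$\ge$'' half of the identity $\nu_{X_s}(L)=\nu_X(L\circ P_s)$ --- namely, verifying that $\pi_1(\Gamma_s)$ meets the density requirement of Theorem \ref{palacios} and that the norming-functional restriction from Definition \ref{defGCC} makes the displayed chain of inequalities valid uniformly over $D_s$; the rest is bookkeeping parallel to the (LCC) case, plus the routine observation that a net squeezed between equal $\liminf$ and $\limsup$ actually converges.
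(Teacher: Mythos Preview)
Your proposal is correct and follows essentially the same route as the paper: the upper estimate is taken from Theorem~5.1 of \cite{mm-jm-mp-br}, and the lower estimate comes from showing $\nu_X(L\circ P_s)\le \nu_{X_s}(L)$ via the (GCC) together with Theorem~\ref{palacios}, using the very same $\Gamma_s$ and the same chain of inequalities. You supply a little more than the paper does --- the Hahn--Banach argument for the reverse inequality $\nu_{X_s}(L)\le\nu_X(L\circ P_s)$, the explicit treatment of the case $P_sx=0$, and the verification that $\|L\circ P_s\|_X=\|L\|_{X_s}$ --- but these are harmless additions, not a different strategy.
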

\begin{proof}
By Theorem 5.1 of \cite{mm-jm-mp-br}, we already know that
$$
n(X) \geq \limsup_s n(X_s).
$$
Now we prove that
$$
\liminf_s n(X_s) \geq n(X).
$$
Fix $ s \in S$ and $ L \in \mathcal{L}(X_s).$
We show that $ \nu(L) \geq \nu(L \circ P_s),$ where  $ \nu(L) $ denotes the numerical radius of $L$ with respect to $X_s$ and  $ \nu(L \circ P_s) $ denotes the numerical radius of $L\circ P_s$ with respect to $X.$ To do that, for any $x \in D_s$ select $x^*_x  \in S_{X^*}$ satisfying the requirements of Definition (\ref{defGCC}). Let
$$
 \Gamma_s = \{(x,x^*_x)\in \Pi(X): x \in D_s\} .
$$
Observe that for any $ s \in S,$
$$
\frac{b_s(x)}{\|P_s(x)\|} = \frac{1}{x^*(P_s(x))} \geq 1.
$$
Note that by Def.(\ref{defGCC}), for any $(x,x^*_x)\in \Gamma_s,$
$$
 | x^*_x \circ L \circ P_s x |  =| (x^*_x)|_{X_s}\circ  L \circ P_s x |  \leq \frac{b_s(x)}{\parallel P_sx \parallel }|(x^*_x)|_{X_s} \circ L
 \circ P_s x |
$$
$$
= |(b_s(x) x^*_x)|_{X_s} L (\frac{P_s x }{\parallel P_s x \parallel}) | \leq \nu(L).
$$
Notice that by Def.(\ref{defGCC}), $ \pi_1(\Gamma_s) = D_s$ and $D_s$ is dense in $ S_{X}.$ By Theorem (\ref{palacios})
applied to $ \Gamma_s$ and $ L \circ P_s,$
$\nu(L\circ P_s) \leq \nu(L),$ as required.
Hence we get immediately that
$$
n(X_s) = \inf \{ \nu(L) : L \in \mathcal{L}(X_s), \| L \| =1 \} = \inf \{ \nu(W) : L \in \mathcal{L}(X), \| W \| =1 \} = n(X).
$$
Consequently $\liminf_s n(X_s) \geq n(X)$ and finally  $\lim_s n(X_s) = n(X),$ as required.
\end{proof}

\section{Computing the numerical index}
Let $L_p(\mu, X)=L_p(\mu)$ denote the classical Banach space of p-integrable functions $f$ from $\Omega$ into $X$ where $(\Omega, \Sigma, \mu)$ is a given measure space. As usual $\ell_p(X)$ denotes the Banach space of all $x=(x_n)_{n \geq 1}, \,\,\, x_n \in X$, such that $\displaystyle \sum_{n=1}^{\infty} ||x_n||^p < \infty$. Finally $\ell_p^m(X)$ is the Banach space of finite sequences $x=(x_n)_{ 1 \leq n\leq m}, \,\, x_m\in X$ equipped with the norm $(\displaystyle \sum_{n=1}^{m} ||x_n||^p)^{\frac{1}{p}}  $.

The numerical index of some vector valued function spaces  are known, as shown in the following theorem.

\begin{thm} (\cite{MP}, \cite{MM-ARV})
Let $K$ be a compact Hausdorff space and $\mu$ be a positive $\sigma$-finite measure. Then

$$ n(C(K,X))=n(L_1(\mu, X))= n(L_{\infty}(\mu, X))=n(X)$$
where by $C(K,X)$ we mean the space of $X$-valued continuous functions on $K$, and by $L_1(\mu, X), \,\,\mbox{and}\,\,(L_{\infty}(\mu, X)$ we denote respectively the space of $X$-valued $\mu$-Bochner-integrable functions and the space of  $X$-valued $\mu$-Bochner-measurable  and essentially bounded functions.

\end{thm}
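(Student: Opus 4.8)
Write $Z$ for any of the three spaces $C(K,X)$, $L_1(\mu,X)$, $L_\infty(\mu,X)$. In every case I would prove the two inequalities $n(Z)\le n(X)$ and $n(Z)\ge n(X)$ separately; the first is an easy application of Theorem \ref{palacios}, while the second is the classical localisation argument of \cite{MP} and \cite{MM-ARV}, from which the statement is quoted.

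For $n(Z)\le n(X)$ the plan is to use the canonical amplification. Given $T\in B(X)$, define $\widehat T\in B(Z)$ by $(\widehat Tf)(t)=T(f(t))$; clearly $\|\widehat T\|=\|T\|$. To estimate $\nu(\widehat T)$ I would pick a set $\Gamma\subset\Pi(Z)$ whose first projection is dense in $S_Z$ and whose functionals come from \emph{pointwise} norming functionals of the corresponding $f$: for $C(K,X)$ take $f\in S_Z$, a point $t_0$ with $\|f(t_0)\|=1$ (one exists since $K$ is compact), a Hahn--Banach functional $x^*\in S_{X^*}$ with $x^*(f(t_0))=1$, and the evaluation $\delta_{t_0}\otimes x^*$; for $L_1(\mu,X)$ take $f$ a normalised simple function $\sum_j\mu(A_j)^{-1}x_j\chi_{A_j}$ (these are dense in $S_Z$) and the functional induced by $\sum_j x_j^*\chi_{A_j}$ with $x_j^*\in S_{X^*}$ norming $x_j$; for $L_\infty(\mu,X)$ argue in the same way on sets $\{\|f\|>1-\delta\}$ and pass to a weak$^*$-limit of the resulting functionals as $\delta\to0$ to get an honest element of $\Pi(Z)$. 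In each case $\pi_1(\Gamma)$ is dense in $S_Z$, so by Theorem \ref{palacios} $\nu(\widehat T)=\sup\{|\varphi(\widehat Tf)|:(f,\varphi)\in\Gamma\}$, and the structural choice of $\varphi$ forces $|\varphi(\widehat Tf)|\le\nu(T)$ because it reduces to evaluating $x^*(Tx)$ for some $(x,x^*)\in\Pi(X)$. Hence $\nu(\widehat T)\le\nu(T)$ (and $\ge$ by using constant functions), so $\nu(\widehat T)/\|\widehat T\|=\nu(T)/\|T\|$ for every $T$, and taking infima gives $n(Z)\le n(X)$.

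For the reverse inequality $n(Z)\ge n(X)$ the idea is localisation. An operator $S\in B(Z)$ with $\|S\|=1$ almost attains its norm at some ``place'' $t_0$ in the domain (a point of $K$, or a small positive-measure set $A$): choose $f\in S_Z$ and $t_0$ with $\|(Sf)(t_0)\|>1-\varepsilon$, use continuity (respectively the $L_1$-decomposition $L_1(\mu,X)=L_1(A,X)\oplus_1 L_1(\Omega\setminus A,X)$) to concentrate attention near $t_0$, and then, multiplying by a bump function $\phi$ supported near $t_0$ (respectively applying the norm-one $L$-projection onto $L_1$ of a small set) and passing to a weak$^*$-limit over shrinking bumps/sets along an ultrafilter, extract from $S$ an operator $R$ on $X$ — a priori on $X^{**}$, where one must also control what happens in the bidual — with $\|R\|$ arbitrarily close to $1$ and $\nu(R)\le\nu(S)$. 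Combining $\nu(R)\ge n(X)\|R\|$ with this and letting $\varepsilon\to0$ yields $\nu(S)\ge n(X)$, hence $n(Z)\ge n(X)$. For $L_\infty(\mu,X)$ one may alternatively reduce to the $C(K,X)$ case by representing $L_\infty(\mu)$ as $C(K)$ on a suitable compact Hausdorff space $K$; when $\mu$ is purely atomic $L_\infty(\mu,X)$ is an $\ell_\infty$-sum of copies of $X$ and the equality is immediate from Proposition 2.1.

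The main obstacle is precisely this localisation step in the $n(Z)\ge n(X)$ direction: making rigorous the passage from $S\in B(Z)$ to an operator on $X$, including the bump-function limiting procedure, the excursion through the bidual, and the verification that the numerical radius does not increase along the limit. The $n(Z)\le n(X)$ half, by contrast, is a direct consequence of Theorem \ref{palacios} once the right norming pairs are written down.
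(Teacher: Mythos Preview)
The paper does not prove this theorem at all: it is stated in Section~3 as a known result quoted from \cite{MP} and \cite{MM-ARV}, with no proof supplied in the present paper. There is therefore nothing in the paper to compare your attempt against.

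That said, your outline is a reasonable reconstruction of the arguments in the cited sources. The $n(Z)\le n(X)$ half via the amplification $\widehat T$ is exactly how \cite{MP} proceeds, and your use of Theorem~\ref{palacios} with evaluation-type functionals is clean (note that for $C(K,X)$ one does not even need density arguments: every $f\in S_{C(K,X)}$ attains its norm, so the full $\Pi(Z)$ is available). For the reverse inequality, your localisation sketch captures the correct strategy, and you rightly flag the passage to the bidual and the control of the numerical radius under the limit as the real content. Two small cautions: the $L_\infty(\mu,X)$ case in \cite{MM-ARV} is not handled by a simple $C(K)$-representation when $X$ is infinite-dimensional (the isomorphism $L_\infty(\mu)\cong C(K)$ does not carry over vector-valued), so that alternative route you suggest would need more care; and in the $C(K,X)$ localisation the original argument in \cite{MP} avoids ultrafilters by working directly with the bump functions and a careful $\varepsilon$-bookkeeping, which is somewhat more elementary than what you indicate.
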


One of the most important families of classical Banach spaces whose numerical indices remain unknown is the family of $L_p$-spaces when $p \neq 1,2, \infty$. This stayed as an open problem till $2005$  when Ed-Dari  \cite {ee} and Ed-Dari and Khamsi \cite {ee-2}  made some progress proving:
\begin{thm} (\cite{ee}, \cite{ee-2})

Let $1\leq p \leq \infty$ be fixed. Then 
\renewcommand{\labelenumi}{\alph{enumi})}
\begin{enumerate}
\item $n(L_p[0,1])=n(\ell_p) = \inf\{n(\ell_p^m) : \,\, m \in \mathbb{N} \}$, and the sequence $\{n(\ell_p^m) \}_{m \in \mathbb {N}}$ is decreasing.
\item $n(L_p(\mu) )\geq  n(\ell_p) $ for every positive measure $\mu$.
\item In the real case
$$\displaystyle \frac{1}{2} M_p \leq n(\ell_p^m) \leq M_p, \,\,\, \mbox{where}\,\,\, M_p=\sup_{t\in [0,1]} \displaystyle \frac{|t^{p-1} -t|}{1+t^p}.$$
\end{enumerate}
\end{thm}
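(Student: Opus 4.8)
The plan is to reduce everything to the behaviour of $n(\ell_p^m)$ as $m\to\infty$ and then transport that information to $\ell_p$ and to $L_p(\mu)$ by the one-complementation machinery already developed. I would begin with the monotonicity in (a). Writing $\ell_p^{m+1}=\ell_p^m\oplus_p\mathbb{R}$, for $T\in S_{\mathcal{L}(\ell_p^m)}$ consider $T\oplus 0\in\mathcal{L}(\ell_p^{m+1})$; it has norm one, and since a state $\bigl((x,t),(x^*,s)\bigr)$ of $\ell_p^{m+1}$ forces equality in H\"older's inequality — hence $x^*(x)=\|x\|\,\|x^*\|$, so that $x/\|x\|$ is normed by $x^*/\|x^*\|$ when both are nonzero — one gets $\bigl|(x^*,s)\bigl((T\oplus0)(x,t)\bigr)\bigr|=|x^*(Tx)|\le\|x\|\,\|x^*\|\,\nu(T)\le\nu(T)$. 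Thus $\nu(T\oplus0)\le\nu(T)$, so $n(\ell_p^{m+1})\le\nu(T)$ for every such $T$, i.e.\ $n(\ell_p^{m+1})\le n(\ell_p^m)$; hence the sequence is decreasing and $\lim_m n(\ell_p^m)=\inf_m n(\ell_p^m)$.

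Next, $n(\ell_p)=\inf_m n(\ell_p^m)$: the spaces $X_m=\ell_p^m$ with the canonical norm-one truncation projections $P_m$ satisfy the (LCC) — this is exactly the model situation discussed after Definition \ref{defGCC}, with norming functional $(|x_i|^{p-1}\mathrm{sgn}\,x_i)/\|x\|_p^{p-1}$ — so Theorem \ref{main} gives $n(\ell_p)=\lim_m n(\ell_p^m)$, which by the previous step equals $\inf_m n(\ell_p^m)$. For (b), given a positive measure $\mu$ and a finite partition $\pi$ of $\Omega$ into sets of finite positive measure, the subspace $E_\pi$ of functions constant on each atom of $\pi$ and vanishing elsewhere is isometric to $\ell_p^{|\pi|}$, is one-complemented by the conditional-expectation-type contraction $f\mapsto\sum_{A\in\pi}\bigl(\mu(A)^{-1}\!\int_A f\bigr)\chi_A$, and these subspaces form a family directed by refinement whose union is dense in $L_p(\mu)$; Theorem~5.1 of \cite{mm-jm-mp-br} then yields $n(L_p(\mu))\ge\limsup_\pi n(\ell_p^{|\pi|})=\inf_m n(\ell_p^m)=n(\ell_p)$, since refining $\pi$ makes $|\pi|$ arbitrarily large. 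Applied to $L_p[0,1]$ with the dyadic conditional expectations (ranges $\ell_p^{2^n}$) this already gives $n(L_p[0,1])\ge n(\ell_p)$.

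For the reverse inequality in (a) I would use that $[0,1]$ is, for each $m$, measure-isomorphic to the disjoint union of $m$ copies of itself, so $L_p[0,1]\cong\ell_p^m\bigl(L_p[0,1]\bigr)$. Given $\varepsilon>0$, pick $S_0\in S_{\mathcal{L}(\ell_p^m)}$ with $\nu(S_0)<n(\ell_p)+\varepsilon$ and transplant $S_0\otimes\mathrm{id}_{L_p[0,1]}$; it has norm one, and a direct computation with states of $\ell_p^m(L_p[0,1])$ — in which the off-diagonal pairings $\phi_i(f_j)$ are bounded by $\|\phi_i\|\,\|f_j\|$, but attaining those bounds forces the $f_j$ to be scalar multiples of one another and thereby collapses the state to a genuine state of $\ell_p^m$ — shows $\nu(S_0\otimes\mathrm{id})\le\nu(S_0)$. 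Hence $n(L_p[0,1])\le n(\ell_p)+\varepsilon$ for all $\varepsilon$, giving equality. (Alternatively one may cite $n(L_p(\mu,X))=n(\ell_p(X))=\lim_m n(\ell_p^m(X))$ from \cite{aga-ed-kham} with $X=\mathbb{R}$.)

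Finally, (c), in the real case. For the upper bound, by the monotonicity above it suffices to bound $n(\ell_p^2)$; take the isometry $T(x_1,x_2)=(-x_2,x_1)$. For $1<p<\infty$ the space $\ell_p^2$ is smooth, so every state is $(x,\Phi(x))$ with $\Phi(x)=(|x_i|^{p-1}\mathrm{sgn}\,x_i)$, and maximising $|\Phi(x)(Tx)|$ over $S_{\ell_p^2}$ — the extremal configuration being $x_1=(1+t^p)^{-1/p}$, $x_2=-t(1+t^p)^{-1/p}$, $t\in[0,1]$ — yields $\nu(T)=\sup_{t\in[0,1]}\tfrac{|t^{p-1}-t|}{1+t^p}=M_p$, whence $n(\ell_p^m)\le n(\ell_p^2)\le M_p$ for $m\ge2$ (the cases $p=1,\infty$ being checked directly). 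The step I expect to be the main obstacle is the lower bound $n(\ell_p^m)\ge\tfrac12 M_p$. Here I would take $T\in S_{\mathcal{L}(\ell_p^m)}$, use finite-dimensional compactness to find $x\in S_{\ell_p^m}$ with $\|Tx\|_p=1$, set $y=Tx$, and note that the state $(x,\Phi(x))$ already gives $\nu(T)\ge|\Phi(x)(y)|$, so we may assume $|\Phi(x)(y)|<\tfrac12 M_p$. Then, perturbing $x$ along a path on $S_{\ell_p^m}$ towards $y$ and using the first-order behaviour of the norming map $\Phi$, one produces a state whose value is at least essentially $M_p-|\Phi(x)(y)|$; balancing the two cases forces $\nu(T)\ge\tfrac12 M_p$. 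Making this perturbation estimate quantitative and — crucially — uniform in $m$, in particular controlling how the $M_p$-quantity degrades when $x$ and $y=Tx$ are not disjointly supported (so that $\mathrm{span}\{x,y\}$ need not be isometric to $\ell_p^2$), is the delicate point; everything else reduces to bookkeeping with norming functionals and the results already in the paper.
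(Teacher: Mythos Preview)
The paper does not prove this theorem at all: it is stated with attribution to \cite{ee} and \cite{ee-2} and then used as background, so there is no ``paper's own proof'' to compare against. What you have written is an independent reconstruction, partly built on the LCC machinery of the present paper (Theorem~\ref{main}) and on Theorem~5.1 of \cite{mm-jm-mp-br}; that is a perfectly reasonable way to recover the equality $n(\ell_p)=\lim_m n(\ell_p^m)$ and the inequality $n(L_p(\mu))\ge n(\ell_p)$, but it is not how Ed-Dari originally argued, since his papers predate these results.

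Two substantive comments on your sketch. First, the step $\nu(S_0\otimes\mathrm{id}_{L_p})\le\nu(S_0)$ is not established by your parenthetical remark. A state of $\ell_p^m(L_p)$ does force $\phi_i(f_i)=\|\phi_i\|\,\|f_i\|$ and $\|\phi_i\|=\|f_i\|^{p-1}$, but the off-diagonal numbers $c_{ij}=\psi_i(g_j)$ (with $g_j=f_j/\|f_j\|$ and $\psi_i$ the unique norming functional of $g_i$) are \emph{not} free parameters in $[-1,1]$; nor does the supremum of $\bigl|\sum_{i,j}(S_0)_{ij}\|\phi_i\|\,\|f_j\|\,c_{ij}\bigr|$ occur only at configurations with all $g_j$ parallel. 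Bounding this bilinear form by $\nu(S_0)$ is exactly the nontrivial computation carried out in \cite{ee}, and your ``collapses to a genuine state'' heuristic does not replace it. Your fallback of quoting \cite{aga-ed-kham} with $X$ the scalar field is logically fine but somewhat circular relative to the historical statement, since that paper generalises the very result you are proving.

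Second, for (c) you correctly identify the lower bound $\tfrac12 M_p\le n(\ell_p^m)$ as the crux, but the ``perturb $x$ towards $Tx$ and balance'' outline is not yet a proof: the issue is precisely that when $x$ and $Tx$ have overlapping supports the two-dimensional picture breaks down, and controlling the first-order variation of $\Phi$ uniformly in $m$ is the entire content of the argument in \cite{ee-2}. As written this is a plan, not a proof.
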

In $2007$ Ed-Dari's result in \cite{ee} was extended for the vector valued functions \cite {aga-ed-kham}. 

\begin{thm} \cite{aga-ed-kham}
Let $X$ be a Banach space. Then for every real number $p$, $1 \leq p \leq \infty$, the numerical index of the Banach space $\ell_p(X)$ is given by 
$$ n(\ell_p(X)) = \displaystyle \lim_m n(\ell_p^m (X)).$$
\end{thm}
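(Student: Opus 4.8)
The plan is to recover this theorem as an immediate instance of the limit theorems proved above, by realizing $\ell_p(X)$ as a space to which the Local (or Global) Characterization Condition applies, so that essentially no new work beyond bookkeeping is required.

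For $1 \le p < \infty$ I would take $X_m = \ell_p^m(X)$, regarded as the subspace of $\ell_p(X)$ consisting of the sequences supported on the first $m$ coordinates, so that $X_1 \subset X_2 \subset \cdots$, the union $\bigcup_m X_m$ is dense in $\ell_p(X)$, and the coordinate truncations $P_m(x_1,x_2,\dots) = (x_1,\dots,x_m,0,\dots)$ are norm-one projections in $\mathcal{P}(X_{m+1},X_m)$. The content to check is the (LCC): for every $m$ and for densely many $x \in S_{\ell_p(X)}$ one must exhibit a norming functional $x^*$ for $x$ such that a positive rescaling of $x^*\!\mid_{X_m}$ norms $P_m x$. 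This is exactly the computation carried out in Example \ref{exCC} (with $S=\mathbb{N}$, all summands equal to $X$, and $W=\{1,\dots,m\}$): the functional $x^* = \big(\|x_n\|^{p-1}x_n^*\big)_n/\|x\|^{p-1}$, with each $x_n^*$ norming $x_n$, does the job, with $b_m(x) = (\|x\|/\|P_m x\|)^{p-1}$ when $P_m x \ne 0$. Granting the (LCC), Theorem \ref{main} gives $n(\ell_p(X)) = \lim_m n(X_m) = \lim_m n(\ell_p^m(X))$. (Equivalently, Example \ref{exCC} shows directly that $\ell_p(X) = (\oplus_{n\in\mathbb{N}}X)_p$ satisfies the (GCC) for the directed family of finite-support truncations, so Theorem \ref{main2} applies; a coordinate relabeling identifies each summand $Z_W$ isometrically with $\ell_p^{|W|}(X)$ and the chain $\{1,\dots,m\}$ is cofinal, whence the net limit equals $\lim_m n(\ell_p^m(X))$.)

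The endpoint $p=\infty$ lies outside the (LCC)/(GCC) framework, so I would dispose of it separately: $\ell_\infty(X) = [\oplus_{n\in\mathbb{N}}X]_{\ell_\infty}$ and $\ell_\infty^m(X) = [\oplus_{n=1}^m X]_{\ell_\infty}$, and Proposition 2.1 (Martin and Paya, \cite{MP}) gives $n(\ell_\infty(X)) = \inf_n n(X) = n(X)$ and $n(\ell_\infty^m(X)) = n(X)$ for every $m$, so $\lim_m n(\ell_\infty^m(X)) = n(X) = n(\ell_\infty(X))$. The same proposition also settles $p=1$ at once, again with common value $n(X)$, consistently with the previous paragraph.

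I expect the verification of the characterization condition for $1<p<\infty$ to be the only step with any real substance. The norm estimate $\|x^*\|\le 1$ is a Hölder inequality with conjugate exponents $p$ and $q=p/(p-1)$; $x^*(x)=1$ is a direct computation; and restricting to $X_m$ and renormalizing pins down $b_m(x)$. The places to be careful are the coordinates where $x_n = 0$ (take $x_n^* = 0$, which also covers the non-uniqueness of norming functionals at $p=1$), the vacuous instance $P_m x = 0$, and --- purely presentationally --- the cofinality and relabeling bookkeeping if one routes the argument through Theorem \ref{main2} rather than Theorem \ref{main}. Everything past that is supplied by the general theory developed above.
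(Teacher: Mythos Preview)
Your proposal is correct, and in fact the paper does not supply its own proof of this theorem at all: it is quoted from \cite{aga-ed-kham} as a motivating result, with no argument given. What you have done is exactly the application the surrounding machinery was built for --- the abstract explicitly says the (LCC)/(GCC) framework is ``motivated by the fact that $n(L_p(\mu,X))=n(\ell_p(X))=\lim_m n(\ell_p^m(X))$'' --- so recovering the statement from Example~\ref{exCC} together with Theorem~\ref{main} (or~\ref{main2}) is precisely the intended use.

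Two minor points worth recording. First, if you go through Theorem~\ref{main2}, the directed set in Example~\ref{exCC} is the family $F$ of finite subsets of $\mathbb{N}$, not $\mathbb{N}$ itself, so the cofinality/relabeling step you flagged really is needed to pass from $\lim_{W\in F} n(Z_W)$ to $\lim_m n(\ell_p^m(X))$; you handled this correctly. Second, for the (LCC) route you could equally well invoke Example~\ref{main1} with all $Y_n=X$ and all $p_n=p$, which is literally $\ell_p(X)$ with its coordinate filtration, avoiding any net bookkeeping. Your separate treatment of $p=\infty$ via Proposition~2.1 is appropriate, since neither Example~\ref{main1} nor Example~\ref{exCC} covers that endpoint.
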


\begin{rem}
 In 1970, it was shown that $$ \{n(\ell_p^2): \,\, 1 \leq p \leq \infty \}= [0,1]$$ in the real case(see  \cite{JD-CM-JP-AW}). Using the above result of Ed-dari and Khamsi \cite{ee-2} one can deduce 
 $$ \displaystyle \frac{M_p}{2} \leq n(\ell_p^{2}) \leq M_p$$ where  $M_p=\sup_{t\in [0,1]} \displaystyle \frac{|t^{p-1} -t|}{1+t^p}$. In \cite{MM-JM} the authors improve the lower bound  to $ \max \{ \displaystyle \frac{M}{2^{1/p}}, \frac{M}{ 2^{1/q}} \}$ for the numerical index of the two-dimensional real $L_p$-space. 
\end{rem}

Recently Martin, Meri and Popov  \cite{mm-jm-mp}  gave a lower estimate for the numerical index of the real $\L_p(\mu)$- spaces, proving the following:

\begin{thm}
Let $p\neq 2$  and   $\mu$ any positive measure. Then  $n(L_p(\mu)) > 0$  in the real case.
\end{thm}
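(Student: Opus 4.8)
The plan is to establish the strict positivity $n(L_p(\mu)) > 0$ for $p \neq 2$ by combining the structural reductions available from the earlier part of the paper with the quantitative two-dimensional estimates recalled in the remarks. First I would reduce to the separable, $\sigma$-finite case: every element of $B(L_p(\mu))$ and its action on the numerical radius is determined by countably many coordinates, and by a standard exhaustion argument $L_p(\mu)$ can be written as a norm-closure of an increasing union of subspaces each isometric to $\ell_p(L_p(\nu_i))$ for finite measures $\nu_i$; since the family of one-complemented finite-dimensional-in-the-right-sense subspaces is directed, Theorem~\ref{main2} (the GCC limit theorem, applied to the $\ell_p$-type decomposition, whose GCC validity is exactly Example~\ref{exCC}) gives $n(L_p(\mu)) = \lim_s n(X_s)$, so it suffices to bound $n(\ell_p^m(Y))$ from below uniformly for the relevant building blocks $Y$.

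Next I would invoke part (b) of the Ed-Dari--Khamsi theorem, $n(L_p(\mu)) \geq n(\ell_p)$ for every positive measure, together with part (a), $n(\ell_p) = \inf_m n(\ell_p^m)$. This turns the whole problem into showing $\inf_m n(\ell_p^m) > 0$ for $p \neq 2$. For this I would use the lower bound from part (c), $n(\ell_p^m) \geq \tfrac12 M_p$ with $M_p = \sup_{t \in [0,1]} \frac{|t^{p-1}-t|}{1+t^p}$, and observe that $M_p > 0$ precisely when $p \neq 2$: the function $t \mapsto |t^{p-1}-t|$ vanishes identically on $[0,1]$ only for $p = 2$, so for $p \neq 2$ one can exhibit a concrete $t_0 \in (0,1)$ with $t_0^{p-1} \neq t_0$, giving a strictly positive, $m$-independent lower bound $n(\ell_p^m) \geq \tfrac12 M_p > 0$. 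Chaining the inequalities yields $n(L_p(\mu)) \geq n(\ell_p) = \inf_m n(\ell_p^m) \geq \tfrac12 M_p > 0$.

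The main obstacle, and the part that genuinely requires the authors' cited work \cite{mm-jm-mp} rather than just the results in this excerpt, is the sharp analysis at $p$ near $2$: the bound $\tfrac12 M_p$ degenerates as $p \to 2$ (indeed $M_p \to 0$), and near $p=2$ one must either track the rate of decay of $M_p$ carefully or, as in \cite{mm-jm-mp}, replace the two-dimensional estimate by a finer inequality relating the numerical radius of an operator on $L_p(\mu)$ to an $L_p$-valued Daugavet-type or convexity argument that does not collapse. Concretely, the hard step is to show that a rank-one or diagonal perturbation argument on $L_p$ cannot drive the numerical radius to zero; I expect one controls $\nu(T)$ from below by testing $T$ against functions of the form $a\mathbf{1}_A + b\mathbf{1}_B$ on disjoint sets $A, B$ and optimizing, which reproduces the $M_p$-type quantity but with the crucial refinement that the supremum is taken over a richer family, keeping it bounded away from $0$ uniformly in $p$ on compact subsets of $(1,\infty)\setminus\{2\}$ and, with more care, near the endpoints. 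The remaining verifications -- measurability, the passage from finite to $\sigma$-finite $\mu$, and the one-complementation of the approximating subspaces -- are routine given Proposition~\ref{one}, the parts of the unnumbered Proposition following it, and Corollary~\ref{crucial}.
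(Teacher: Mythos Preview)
Your approach is genuinely different from the one the paper describes. The paper summarizes the proof of Mart\'{\i}n--Mer\'{\i}--Popov via the \emph{absolute numerical radius} $|\nu|(T)=\sup\{\int_\Omega |x^{\sharp}\,Tx|\,d\mu:\|x\|_p=1\}$: one shows $\nu(T)\ge \tfrac{M_p}{6}\,|\nu|(T)$ and $|\nu|(T)\ge \tfrac{1}{2}\,n(L_p^{\mathbb C}(\mu))\,\|T\|\ge \tfrac{1}{2e}\|T\|$, giving $n(L_p(\mu))\ge \tfrac{M_p}{12e}>0$. You instead try to chain parts (a), (b), (c) of the Ed-Dari/Ed-Dari--Khamsi theorem to obtain $n(L_p(\mu))\ge n(\ell_p)=\inf_m n(\ell_p^m)\ge \tfrac12 M_p>0$. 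Your first paragraph (the GCC reduction) is then superfluous, since (b) already gives $n(L_p(\mu))\ge n(\ell_p)$ directly.

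Two issues deserve flagging. First, your third paragraph is a misreading of the statement: the theorem fixes a single $p\ne 2$, and for such $p$ the number $M_p$ is a strictly positive constant; nothing ``degenerates'' and no uniformity in $p$ is needed. If your chain in the second paragraph is valid, the proof is already finished there and the appeal to \cite{mm-jm-mp} you describe is unnecessary. Second, and more seriously, the step that does the real work in your chain is the \emph{uniform-in-$m$} lower bound $n(\ell_p^m)\ge \tfrac12 M_p$. The survey states (c) in this form, but in the original sources \cite{ee,ee-2} the two-sided estimate is established only for $\ell_p^2$ (this is exactly what the subsequent Remark about $n(\ell_p^2)$ reflects); what was known for general $m$ was merely $n(\ell_p^m)>0$ with no $m$-independent constant. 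Without that uniformity your infimum $\inf_m n(\ell_p^m)$ could a priori be $0$, and proving it is not is precisely the content of the theorem you are trying to establish. In other words, your argument leans on a statement of (c) that, read literally, already encodes the Mart\'{\i}n--Mer\'{\i}--Popov result; the absolute-numerical-radius route in the paper is what actually supplies that missing uniform bound.
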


The idea of the proof of the above theorem is to define a new seminorm on $ \mathcal{L}(L_{p}(\mu))$, which is in between the numerical radius and the operator norm i.e., the numerical radius and the operator norm are equivalent on  $\mathcal{L}(L_{p}(\mu))$ for every $p\neq 2$ and every positive measure $\mu$. This also answers the question posed in \cite{VK-MM-RP}. 

Given an operator $T\in \mathcal{L}(L_{p}(\mu)) $,  they define the \textit{absolute numerical radius} of $T$  to be
$$ |\nu|(T): = \sup \{ \displaystyle \int_{\Omega} | x^{ \sharp}Tx | d\mu:\,\,\, x\in L_p(\mu),\, ||x||_p =1\}=  \sup \{ \displaystyle \int_{\Omega} | x^{ p-1}| |Tx | d\mu:\,\,\, x\in L_p(\mu), \,||x||_p =1\}$$
where for any $x \in L_{p}(\mu)$, by $x^{\sharp}$ we mean  $x^{\sharp} = |x|^{p-1} \mbox{sign}(x)$ for the real case and  $x^{\sharp} = |x|^{p-1} \mbox{sign}(\bar{x})$ for the complex case. Note that with the above notation, for $x\in L_{p}(\mu)$ we can write:
$$ \nu(T)= \sup \{| \displaystyle \int_{\Omega}  x^{ \sharp}Tx  d\mu|:\,\,\, x\in L_p(\mu),\, ||x||_p =1\} $$
clearly $\nu(T) \leq |\nu|(T)\leq ||T||$ and for positive operators on $L_p(\mu)$ the numerical radius and the absolute numerical radius coincide. Using the absolute numerical radius, it is proved in \cite {mm-jm-mp} that:

$$ \nu(T) \geq \displaystyle \frac{M_p}{ 6}| \nu|(T)\,\,\mbox{ and}\,\,  |\nu|(T) \geq \displaystyle \frac{n( L_{p}^{\mathbb{C}}(\mu))}{2} || T ||$$

where  $M_p= \displaystyle \max_{t\in [0,1]} \displaystyle \frac{|t^{p-1}- t|}{1+t^p}$
 and  $n( L_{p}^{\mathbb{C}} (\mu)) $ denote the complex numerical index of $L_p (\mu)$ . Since $n(X) \geq \displaystyle \frac{1}{e}$ for any complex space they obtain 
 $$ n(L_p(\mu)) \geq \displaystyle \frac{M_p}{12e}.$$ Note that $M_p> 0$ for $p \neq 2$, this extends the result that $n(\ell_p^{m}) > 0$
 for $p\neq 2$ and $m\in \mathbb{N}$.
 
 It is an open problem whether or not  $n(L_p(\mu))= M_p$ in the real case, and the  value for $n(L_p(\mu))$ in the complex case is unknown. However using the concept of absolute numerical radius, one can define \textit{absolute numerical index} by:
 
 $$ |n|(L_p (\mu) ):= \inf \{ |v|(T): \,\, T\in \mathcal{L}(L_p(\mu)), \,\,||T||=1 \}$$ and in \cite{MM-JM-MP} it is shown that 
 \begin{thm}
  Let $1 < p < \infty$ and let $\mu$ be a positive measure such that $\dim (L_p(\mu)) \geq 2$. Then 
  
  $$  |n|(L_p (\mu) )= \displaystyle \frac {1}{ p^{1/p} q^{1/q} }.$$
  where $q = \displaystyle \frac{p}{p-1}$ is the conjugate exponent to $p$.
 \end{thm}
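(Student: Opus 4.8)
Throughout write $c_p:=\frac{1}{p^{1/p}q^{1/q}}$. The starting point is the elementary identity
$$
c_p\;=\;\frac{(p-1)^{(p-1)/p}}{p}\;=\;\max_{t\ge 0}\frac{t^{p-1}}{1+t^{p}},
$$
the maximum being attained at $t_0=(p-1)^{1/p}$; equivalently $c_p=\max\{\beta\alpha^{p-1}:\alpha^{p}+\beta^{p}=1\}$, attained at $\alpha^{p}=(p-1)/p$, $\beta^{p}=1/p$. Both estimates for $|n|(L_p(\mu))$ are organised around this number.

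\smallskip
\noindent\emph{Upper estimate.} Since $\dim L_p(\mu)\ge 2$ there are disjoint measurable sets $A,B$ with $0<\mu(A),\mu(B)<\infty$. I would put $u=\mathbf 1_A/\mu(A)^{1/p}\in S_{L_p(\mu)}$, $\varphi=\mathbf 1_B/\mu(B)^{1/q}\in S_{L_q(\mu)}=S_{L_p(\mu)^{*}}$, and let $T$ be the rank-one operator $Tf=\big(\int_\Omega\varphi f\,d\mu\big)u$, so that $\|T\|=\|\varphi\|_q\|u\|_p=1$. For $f\in S_{L_p(\mu)}$, two applications of H\"older's inequality (exponents $p,q$ on $\int_B$; exponents $q,p$ on $|f|^{p-1}\mathbf 1_A$) give
$$
\int_\Omega|f|^{p-1}|Tf|\,d\mu=\Big|\int_B\tfrac{f}{\mu(B)^{1/q}}\,d\mu\Big|\cdot\int_A\tfrac{|f|^{p-1}}{\mu(A)^{1/p}}\,d\mu\;\le\;\|\mathbf 1_B f\|_p\,\|\mathbf 1_A f\|_p^{\,p-1}\;\le\;c_p,
$$
the last step being the maximisation above with $\alpha=\|\mathbf 1_A f\|_p$, $\beta=\|\mathbf 1_B f\|_p$, $\alpha^p+\beta^p\le 1$. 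Equality is attained at $f=\alpha_0\mathbf 1_A/\mu(A)^{1/p}+\beta_0\mathbf 1_B/\mu(B)^{1/p}$ with $\alpha_0=((p-1)/p)^{1/p}$, $\beta_0=(1/p)^{1/p}$ (every H\"older step then being an equality), whence $|\nu|(T)=c_p$ and $|n|(L_p(\mu))\le c_p$; in particular the infimum defining $|n|$ is attained.

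\smallskip
\noindent\emph{Reduction of the lower estimate to $\ell_p^N$.} Let $T\in\mathcal L(L_p(\mu))$ with $\|T\|=1$ and fix $\varepsilon>0$; choose $x_0\in S_{L_p(\mu)}$ with $\|Tx_0\|_p>1-\varepsilon$. Approximating $x_0$ and $Tx_0$ by simple functions I would pick a finite sub-$\sigma$-algebra $\mathcal A$ with $N\ge 2$ atoms whose conditional expectation $P=\mathbb E(\cdot\mid\mathcal A)$ — a norm-one projection onto $L_p(\mu,\mathcal A)\cong\ell_p^N$ — satisfies $\|Px_0-x_0\|_p<\varepsilon$ and $\|P(Tx_0)-Tx_0\|_p<\varepsilon$. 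Put $S=PTP|_{\ell_p^N}$; then $\|S\|\ge\|PT(Px_0)\|_p>1-3\varepsilon$, and for any $x$ in the range of $P$ (hence constant on each atom, so $|x|^{p-1}$ is $\mathcal A$-measurable) conditional Jensen gives $|P(Tx)|\le\mathbb E(|Tx|\mid\mathcal A)$ pointwise, so that
$$
\int_\Omega|x|^{p-1}|Sx|\,d\mu=\int_\Omega|x|^{p-1}|P(Tx)|\,d\mu\le\int_\Omega|x|^{p-1}\,\mathbb E(|Tx|\mid\mathcal A)\,d\mu=\int_\Omega|x|^{p-1}|Tx|\,d\mu\le|\nu|(T).
$$
Taking the supremum over $x\in S_{\ell_p^N}$ yields $|\nu|_{\ell_p^N}(S)\le|\nu|(T)$, hence $|\nu|(T)\ge(1-3\varepsilon)\,|n|(\ell_p^N)$; letting $\varepsilon\to0$ it remains to prove $|n|(\ell_p^N)\ge c_p$ for all $N\ge 2$ (the reverse inequality being the Upper estimate applied to $\ell_p^N$).

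\smallskip
\noindent\emph{The finite-dimensional lower bound — the crux.} In $\ell_p^N$ the infimum is attained by some $T$ with $\|T\|=1$, and $\|T\|$ is attained at some $x_0\in S_{\ell_p^N}$; write $y_0=Tx_0\in S_{\ell_p^N}$. The plan is a dichotomy. If $\int_\Omega|x_0|^{p-1}|y_0|\,d\mu\ge c_p$, testing $|\nu|(T)$ at $x_0$ already finishes. Otherwise $y_0$ is forced to be ``displaced'' from $x_0$ (small overlap of $|x_0|^{p-1}$ with $|y_0|$, two near-unit $L_p$-masses spread differently), which yields an approximate disjointness of their supports; one then tests $|\nu|(T)$ on the family $x_t^{(\sigma)}=(x_0+t\,\sigma\,y_0)/(1+t^p)^{1/p}$, $t>0$, with $\sigma$ a unimodular scalar chosen, via averaging over $\sigma\in\{\pm1\}$ and $\tfrac12(|y_0+tTy_0|+|y_0-tTy_0|)\ge|y_0|$ pointwise, so as to forbid cancellation in $Tx_t^{(\sigma)}=(y_0+t\,\sigma\,Ty_0)/(1+t^p)^{1/p}$. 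In the idealised disjoint case this gives
$$
\int_\Omega|x_t^{(\sigma)}|^{p-1}|Tx_t^{(\sigma)}|\,d\mu\ge\frac{1}{1+t^p}\int_\Omega t^{p-1}|y_0|^{p-1}\,|y_0|\,d\mu=\frac{t^{p-1}}{1+t^p},
$$
and optimising over $t$ returns exactly $c_p$. The hard part is making this quantitative: controlling the error terms coming from the genuine (rather than exact) disjointness of $x_0$ and $y_0$ and from the interference $t\,\sigma\,Ty_0$, and reconciling the ``large overlap'' and ``small overlap'' regimes so that $|\nu|(T)\ge c_p$ holds uniformly; one also checks that equality pins down $T$ as, up to isometries, the extremal rank-one operator above. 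I expect essentially all of the difficulty of the theorem to reside in this variational/case-analysis step, the rest being the bookkeeping indicated above.
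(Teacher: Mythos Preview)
The paper does not actually prove this theorem. It is a survey, and this result is stated with attribution to \cite{MM-JM-MP} (Mart\'{\i}n, Mer\'{\i} and Popov, \emph{On the numerical radius of operators in Lebesgue spaces}, J.\ Funct.\ Anal.\ 261 (2011), 149--168); no argument is reproduced. So there is no ``paper's own proof'' to compare your proposal against.

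That said, a few remarks on your outline are worth making. Your upper estimate is clean and complete: the rank-one operator built from disjoint indicators, together with the scalar maximisation $\max\{\beta\alpha^{p-1}:\alpha^p+\beta^p=1\}=c_p$, does give $|\nu|(T)=c_p$ and hence $|n|(L_p(\mu))\le c_p$. Your reduction to $\ell_p^N$ via conditional expectations is also sound; the key point that $|x|^{p-1}$ is $\mathcal A$-measurable when $x$ lies in the range of $P$, so that one can pull $P$ across by the conditional Jensen inequality, is exactly what is needed.

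Where your proposal is incomplete is, as you yourself flag, the finite-dimensional lower bound. The dichotomy you sketch (large overlap of $|x_0|^{p-1}$ with $|y_0|$ versus approximate disjointness, then testing on $(x_0+t\sigma y_0)/(1+t^p)^{1/p}$) is plausible heuristically, but the ``approximate disjointness'' step does not obviously yield a uniform bound: smallness of $\int|x_0|^{p-1}|y_0|\,d\mu$ does not by itself force near-disjoint supports, and the error terms from $t\sigma Ty_0$ are not controlled without further information on $T$. You have correctly located the entire difficulty, but what you have written is a strategy rather than a proof; to see how this gap is actually closed you should consult the original paper \cite{MM-JM-MP}.
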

 Using the above theorem together with the estimate $\nu(T) \geq \displaystyle \frac{M_p}{ 6}| \nu|(T) $ given in \cite{mm-jm-mp}, one can improve the estimation of the numerical index of $L_p(\mu)$ space in the real case by: $$\nu(L_p(\mu)) \geq \displaystyle \frac{M_p  }{ 6}  .\displaystyle \frac {1}{ p^{1/p} q^{1/q} }.$$

 The numerical  radius of rank one operators and finite rank operators in $\L_p(\mu)$ are also considered in several papers \cite{MC-MM-JM} , \cite{MM-JM-MP}. The motivation to consider rank-one operators stems from several directions, one such reason being the possibility of getting more information about the geometry of the Banach space. For example a well-known theorem due to James states that a Banach space is reflexive if and only if each rank-one operator attains its norm. A version of this theorem given in \cite{MA-GR}  asserts that numerical radius characterizes reflexive Banach spaces for which rank-one operators attain their numerical radius. Additionally, rank-one operators are contained in any operator ideal, thus  rank-one numerical index becomes an upper bound for any numerical index defined as associated to any operator ideal. Now we mention briefly some  recent results about rank-one numerical indices. 
 
 The \textit{rank-one numerical index} of a Banach space $n_1(X)$ is defined as:
 $$ n_1(X)= \inf\{ \nu(T):\,\,\, T\in L(X), \,\,||T|| =1, \,\, \dim(T(X)) \leq 1\}$$
 \begin{thm} (\cite{MC-MM-JM},  Theorem 2.1)
 
 Let $X$ be a real Banach space. Then $$n_1(X) \geq \displaystyle \frac{1}{e}.$$
 \end{thm}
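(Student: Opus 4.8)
My plan is to imitate the Bohnenblust--Karlin--Glickfeld argument that yields $n(X)\ge 1/e$ for complex spaces, exploiting the fact that for a \emph{rank-one} operator the operator exponentials that arise can be written in closed form, so that the complex-analytic ingredient of that argument can be replaced by an elementary computation.

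First I would normalize: given a rank-one $T\in\mathcal{L}(X)$ with $\|T\|=1$, write $Tx=x^{*}(x)\,u$ with $u\in S_{X}$, $x^{*}\in S_{X^{*}}$, and set $c=x^{*}(u)\in[-1,1]$. Then $T^{2}=cT$, so $T^{n}=c^{\,n-1}T$ for $n\ge1$ and, for every real $\lambda$, $e^{\lambda T}=I+g_{c}(\lambda)\,T$ where $g_{c}(\lambda)=(e^{c\lambda}-1)/c$ (with $g_{0}(\lambda)=\lambda$ in the limiting case $c=0$). Next I would use the standard numerical-range bound on the exponential: for $s\ge0$ the one-sided derivative of $s\mapsto\|e^{sT}z\|$ is at most $\sup W(T)\,\|e^{sT}z\|$, whence $\|e^{sT}\|\le e^{s\sup W(T)}\le e^{s\,\nu(T)}$; applying this also to $-T$ gives $\|e^{\lambda T}\|\le e^{|\lambda|\,\nu(T)}$ for all $\lambda\in\mathbb{R}$.

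The heart of the proof is then a \emph{lower} estimate for $\|e^{\lambda T}\|=\|I+g_{c}(\lambda)T\|$ that recovers $\|T\|=1$ up to the factor $e$. In the complex case this comes for free from a Cauchy estimate on $|z|=1$; over $\mathbb{R}$ I would instead argue by cases on the size of $c=x^{*}(u)$, using the a priori bound $|c|\le\nu(T)$ (test the pair $(u,u^{*})$ with $u^{*}$ a norming functional of $u$, so that $x^{*}(u)=u^{*}(Tu)\in W(T)$). If $|c|\ge 1/e$ we are done; if $|c|<1/e$ then $T$ is close to nilpotent, $e^{\lambda T}$ is close to $I+\lambda T$, and by choosing $y\in S_{X}$ with $x^{*}(y)$ near $1$ together with a norming functional (for $y$, or for $u$) and invoking Theorem~\ref{palacios} one bounds $\|e^{\lambda T}\|$ below by a quantity growing like $|\lambda|$, while the same witnessing pairs simultaneously force $\nu(T)$ to be at least the corresponding geometric constant. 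Feeding these lower bounds into $\|e^{\lambda T}\|\le e^{|\lambda|\nu(T)}$ and optimising over $\lambda$ — the optimal choice being $|\lambda|\,\nu(T)=1$ — produces $1\le e\,\nu(T)$, i.e.\ $\nu(T)\ge 1/e$; taking the infimum over all norm-one rank-one $T$ gives $n_{1}(X)\ge 1/e$.

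The step I expect to be the main obstacle is precisely this lower bound for $\|I+g_{c}(\lambda)T\|$. The crude triangle-inequality bound $\|I+\beta T\|\ge|\beta|\,\|T\|-1$ is far too weak — on its own it yields only a constant strictly below $1/e$ (around $0.28$), and simple examples (a norm-one nilpotent rank-one operator on $\ell_{\infty}^{2}$, or the ``rotation-type'' rank-one operator on $\ell_{2}^{2}$) show that this crude bound can essentially be attained for particular test vectors even though $\nu(T)$ is much larger. So one must use the geometry of norming functionals together with the a priori bound $\nu(T)\ge|x^{*}(u)|$, and arrange that the two regimes ($|x^{*}(u)|$ large versus small) dovetail to give exactly $1/e$. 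This is also the only place where the argument genuinely needs $T$ to be rank-one: for arbitrary norm-one operators on a real space no positive lower bound for $\nu$ can hold, as a rotation on a real Hilbert space of dimension two shows.
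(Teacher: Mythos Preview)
The paper is a survey and does not prove this theorem; it merely cites it from \cite{MC-MM-JM}. So there is no in-paper proof to compare your attempt against.

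On its own merits, your proposal contains a genuine gap, which you yourself flag: the lower estimate for $\|e^{\lambda T}\|$. Your proposed case split on $|c|=|x^{*}(u)|$ does not close it. When $|c|\ge 1/e$ the bound $\nu(T)\ge|c|$ is indeed immediate, but the regime $|c|<1/e$ must still produce $\nu(T)\ge 1/e$ on its own --- the nilpotent case $c=0$ already has to reach $1/e$ --- and there the crude bound $\|I+\lambda T\|\ge|\lambda|-1$ combined with $\|e^{\lambda T}\|\le e^{|\lambda|\nu}$ yields only $\nu(T)\ge\sup_{t>1}\frac{\ln(t-1)}{t}\approx 0.278$, exactly as you observed. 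The appeal to ``witnessing pairs simultaneously forcing $\nu(T)$ to be at least the corresponding geometric constant'' is not an argument; it is a restatement of what remains to be proved.

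That said, your framework can be rescued by a one-line change: replace $e^{\lambda T}-I=g_{c}(\lambda)T$ by the antisymmetric combination
\[
e^{\lambda T}-e^{-\lambda T}=\bigl(g_{c}(\lambda)-g_{c}(-\lambda)\bigr)\,T=\frac{2\sinh(c\lambda)}{c}\,T
\]
(read as $2\lambda T$ when $c=0$). Then
\[
1=\|T\|\le\frac{|c|}{2|\sinh(c\lambda)|}\,\bigl(\|e^{\lambda T}\|+\|e^{-\lambda T}\|\bigr)\le\frac{|c|}{|\sinh(c\lambda)|}\,e^{|\lambda|\nu(T)},
\]
and since $|\sinh t|\ge|t|$ for all real $t$ this gives $|\lambda|\le e^{|\lambda|\nu(T)}$ for every $\lambda>0$. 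Taking $\lambda=e$ yields $\nu(T)\ge 1/e$, with no case analysis on $c$ required.
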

 The fact that $n(X) \geq \displaystyle \frac{1}{e}$ in the complex case is known as the Bonehnblust-Karlin theorem \cite{BG}, and the accomplishments in the above theorems is the extension to real spaces. Furthermore, this lower bound for the rank-one numerical index is the best possible bound, as shown in  \cite{MC-MM-JM}.  Many properties of rank-one numerical indices follow from analogous  ones for the classical numerical index. For example  for the cases $c_0$-, $\ell_1$-, or $\ell_{\infty}$-sums  we have the following:
 
 \begin{pro}
 Let   $\{X_i \,\, i\in I\}$  be a of Banach spaces. Then

$$n_1([\oplus_{i\in I} X_i]_{c_{0}}) = n_1([  \oplus_{i\in I} {X_i}]_{\ell _{1}})=n_1([  \oplus_{i\in I} {X_i}]_{\ell _{\infty}})  =  \displaystyle \inf _{i} n_1(X_i).$$
 \end{pro}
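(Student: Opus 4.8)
The plan is to follow the strategy used by Martín and Payá for the classical numerical index (the first Proposition quoted above), keeping track of the fact that every operator produced along the way has rank at most one. Throughout, write $X = [\oplus_{i\in I}X_i]_E$ with $E\in\{c_0,\ell_1,\ell_\infty\}$, let $P_i\in\mathcal{P}(X,X_i)$ be the natural norm-one projections, $J_i\colon X_i\to X$ the natural isometric inclusions, and recall that $X^*$ is the $\ell_1$-sum of the $X_i^*$ when $E=c_0$ and the $\ell_\infty$-sum of the $X_i^*$ when $E\in\{\ell_1,\ell_\infty\}$. I would establish the two inequalities $n_1(X)\le\inf_i n_1(X_i)$ and $n_1(X)\ge\inf_i n_1(X_i)$ separately; in contrast to the proof of Theorem \ref{main}, no external limit theorem is needed here.

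For $n_1(X)\le\inf_i n_1(X_i)$ I would fix $i_0$ and a rank-one $S\in\mathcal{L}(X_{i_0})$ with $\|S\|=1$, and pass to $\widetilde S = J_{i_0}\circ S\circ P_{i_0}\in\mathcal{L}(X)$. This $\widetilde S$ still has rank at most one and $\|\widetilde S\| = \|S\| = 1$, so it is a legitimate competitor in the infimum defining $n_1(X)$; and using the explicit form of $B_{X^*}$ — a pair $(x,\Phi)\in\Pi(X)$ forces the $i_0$-component of $\Phi$, after normalization, to be a norming functional of $x_{i_0}$ — one verifies $\nu(\widetilde S)=\nu(S)$, exactly as in the scalar case and in the spirit of Corollary \ref{crucial} and Theorem \ref{palacios}. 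Taking the infimum over such $S$ gives $n_1(X)\le n_1(X_{i_0})$ for every $i_0$.

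For the reverse inequality I would take an arbitrary rank-one $T\in\mathcal{L}(X)$ with $\|T\|=1$, write $Tx=\Psi(x)\,y$ with $\Psi\in S_{X^*}$, $y\in S_X$, and set $\Psi=(\psi_j)_j$, $y=(y_j)_j$. Given $\varepsilon>0$, choose a dominant coordinate $i_0$: one with $\|y_{i_0}\|>1-\varepsilon$ when $E\in\{c_0,\ell_\infty\}$, and one with $\|\psi_{i_0}\|>1-\varepsilon$ when $E=\ell_1$. The diagonal block $R:=P_{i_0}TJ_{i_0}$ is the rank-one operator $v\mapsto\psi_{i_0}(v)\,y_{i_0}$ on $X_{i_0}$, so $\nu(R)\ge n_1(X_{i_0})\,\|R\| = n_1(X_{i_0})\,\|\psi_{i_0}\|\,\|y_{i_0}\|$, and hence one can pick a pair $(u,\phi_0)\in\Pi(X_{i_0})$ with $|\psi_{i_0}(u)|\,|\phi_0(y_{i_0})|$ as close to $n_1(X_{i_0})\,\|\psi_{i_0}\|\,\|y_{i_0}\|$ as desired. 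The final step is to promote $(u,\phi_0)$ to a pair $(x,\Phi)\in\Pi(X)$: for $E\in\{c_0,\ell_\infty\}$ take $\Phi$ supported on the $i_0$-th coordinate and equal to $\phi_0$ there, and use the freedom in the remaining coordinates of $x$ — the face of $B_X$ determined by $\Phi$ is unconstrained off $i_0$ — to recover $\sum_{j\ne i_0}\|\psi_j\|$; for $E=\ell_1$ dually take $x$ supported on $i_0$ and equal to $u$ there, and use the freedom in the remaining coordinates of $\Phi$ to recover $\sum_{j\ne i_0}\|y_j\|$. A short phase-alignment and bookkeeping argument, using $\sum_j\|\psi_j\|=1$ (resp. $\sum_j\|y_j\|=1$) and $\|y_{i_0}\|>1-\varepsilon$ (resp. $\|\psi_{i_0}\|>1-\varepsilon$), then gives $|\Phi(Tx)|\ge n_1(X_{i_0})(1-\varepsilon)-\varepsilon$, whence $\nu(T)\ge n_1(X_{i_0})\ge\inf_j n_1(X_j)$ on letting $\varepsilon\to0$. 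As $T$ was arbitrary, $n_1(X)\ge\inf_j n_1(X_j)$, and together with the previous paragraph this proves the proposition.

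The hard part will be the reverse inequality, and within it the genuinely delicate point is the off-diagonal case, in which the diagonal block $R$ carries very little of the norm of $T$ — as already happens for $T(a,b)=(b,0)$ on $\ell_\infty^2$, whose diagonal blocks both vanish. What saves the argument is that the dominant coordinate is selected by the size of $y_{i_0}$ (resp. $\psi_{i_0}$) and not by $\|R\|$, and that the bookkeeping must be organised so that the free-coordinate contributions add to the block contribution constructively and in phase; it is precisely this that makes the final estimate close up to $n_1(X_{i_0})$ rather than merely to $n_1(X_{i_0})^2$. A handful of degenerate sub-cases ($\psi_{i_0}=0$, or $y_{i_0}=0$, or $n_1(X_{i_0})=0$) must be handled directly, but each is immediate. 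Finally, one should observe that the one-coordinate states used in the reverse inequality serve only to bound $\nu(T)$ from below, so there is no need to verify that they form a norming family of states.
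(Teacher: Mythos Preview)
The paper does not prove this proposition: it merely states it, prefaced by the remark that ``many properties of rank-one numerical indices follow from analogous ones for the classical numerical index,'' implicitly referring the reader to the Mart\'{\i}n--Pay\'a argument behind Proposition~2.1. Your proposal is exactly an adaptation of that argument to rank-one operators, so in spirit you are doing precisely what the paper suggests; the two-step bookkeeping you sketch for the reverse inequality (diagonal block plus phase-aligned free coordinates) does close up correctly, since from $|\psi_{i_0}(u)|\,|\phi_0(y_{i_0})|\gtrsim n_1(X_{i_0})\|\psi_{i_0}\|\,\|y_{i_0}\|$ and $|\psi_{i_0}(u)|\le\|\psi_{i_0}\|$ one gets $|\phi_0(y_{i_0})|\gtrsim n_1(X_{i_0})\|y_{i_0}\|$, and then
\[
\bigl(|\psi_{i_0}(u)|+1-\|\psi_{i_0}\|\bigr)\,|\phi_0(y_{i_0})|
\ \ge\ n_1(X_{i_0})\|\psi_{i_0}\|\,\|y_{i_0}\|+(1-\|\psi_{i_0}\|)\,n_1(X_{i_0})\|y_{i_0}\|
\ =\ n_1(X_{i_0})\|y_{i_0}\|.
\]

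One slip to flag: the dual of $[\oplus_i X_i]_{\ell_\infty}$ is \emph{not} the $\ell_\infty$-sum of the $X_i^*$ (nor the $\ell_1$-sum) when $I$ is infinite; general states on the $\ell_\infty$-sum need not be of coordinate type. This does not break your argument, but you must route the $\ell_\infty$ case through Theorem~\ref{palacios}: the set of $x\in S_X$ for which $\sup_i\|x_i\|$ is attained is dense, and for such $x$ one has coordinate-type norming functionals in the $\ell_1$-sum of the $X_i^*$, after which the $c_0$ computation goes through verbatim. For the reverse inequality this is irrelevant anyway, since the states you construct are single-coordinate and hence legitimate in all three cases.
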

It is also known that rank-one numerical index of an absolute sum of Banach spaces is less or equal than the rank-one numerical index of each summands, as stated in the following proposition.
\begin{pro}

(\cite{mm-jm-mp-br})
Let  $\{X_i \,\, i\in I\}$  be a family of Banach spaces and $1< p< \infty. $ Then,

$$ n_1([\oplus_{i \in I} X_i]_{\ell_p} )\leq  \inf \{ n_1(X_i) \,\,\,  i\in I \} $$
 
\end{pro}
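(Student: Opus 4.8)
The plan is, for a fixed $i_0 \in I$, to prove $n_1(X) \le n_1(X_{i_0})$ where $X = [\oplus_{i \in I} X_i]_{\ell_p}$; since $i_0$ is arbitrary this gives the claim. The idea is to transplant a near-optimal rank-one operator on $X_{i_0}$ to a rank-one operator on $X$ of the same norm and with no larger numerical radius. This is the rank-one analogue of the argument used in \cite{mm-jm-mp-br} for the ordinary numerical index; the one extra observation needed here is that the transplantation preserves one-dimensionality of the range.

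First I would fix the canonical maps: the isometric embedding $J_{i_0} \colon X_{i_0} \to X$ placing an element into coordinate $i_0$ (and $0$ elsewhere), and the canonical norm-one projection $P_{i_0} \colon X \to X_{i_0}$, $P_{i_0}((x_i)_{i \in I}) = x_{i_0}$, so that $P_{i_0} J_{i_0}$ is the identity on $X_{i_0}$. Given $\varepsilon > 0$, I would pick $S \in \mathcal{L}(X_{i_0})$ with $\|S\| = 1$, $\dim S(X_{i_0}) \le 1$ and $\nu(S) < n_1(X_{i_0}) + \varepsilon$ (numerical radius with respect to $X_{i_0}$), and set $\tilde S = J_{i_0} S P_{i_0} \in \mathcal{L}(X)$. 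Then $\dim \tilde S(X) \le 1$, and since $\tilde S J_{i_0} = J_{i_0} S$ one gets $\|\tilde S\| = \|S\| = 1$; thus $\tilde S$ is admissible in the infimum defining $n_1(X)$.

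The core step is to show $\nu(\tilde S) \le \nu(S)$, where the numerical radius of $\tilde S$ is taken with respect to $X$. Here I would use the identification $X^* = [\oplus_{i \in I} X_i^*]_{\ell_q}$ (valid since $1 < p < \infty$, with $q$ conjugate to $p$). For an arbitrary norming pair $(x, x^*) \in \Pi(X)$, writing $x = (x_i)_i$ and $x^* = (x_i^*)_i$, one has $\sum_i \|x_i\|^p = \sum_i \|x_i^*\|^q = 1$ and $\sum_i x_i^*(x_i) = 1$; Hölder's inequality then forces every inequality in $1 = \sum_i x_i^*(x_i) \le \sum_i |x_i^*(x_i)| \le \sum_i \|x_i^*\|\,\|x_i\| \le 1$ to be an equality, whence $x_i^*(x_i) = \|x_i^*\|\,\|x_i\|$ for every $i$ and $\|x_{i_0}^*\|\,\|x_{i_0}\| \le 1$. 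Since $x^*(\tilde S x) = x_{i_0}^*(S x_{i_0})$, this term vanishes unless $x_{i_0}, x_{i_0}^* \ne 0$, in which case $u := x_{i_0}/\|x_{i_0}\|$ and $u^* := x_{i_0}^*/\|x_{i_0}^*\|$ satisfy $u^*(u) = 1$, i.e. $(u, u^*) \in \Pi(X_{i_0})$, and
$$
|x^*(\tilde S x)| = \|x_{i_0}^*\|\,\|x_{i_0}\|\,|u^*(S u)| \le |u^*(S u)| \le \nu(S).
$$
Taking the supremum over $\Pi(X)$ gives $\nu(\tilde S) \le \nu(S) < n_1(X_{i_0}) + \varepsilon$, hence $n_1(X) < n_1(X_{i_0}) + \varepsilon$; letting $\varepsilon \to 0$ and then infimizing over $i_0 \in I$ finishes the proof.

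I expect the main (if mild) obstacle to be exactly the verification that the $i_0$-components of an arbitrary norming pair of $X$, once normalized, again form a norming pair of $X_{i_0}$ — this is where the equality case of Hölder's inequality enters, and it is the only point where $1 < p < \infty$ is really used (through $X^* = [\oplus_i X_i^*]_{\ell_q}$). Everything else — $\tilde S$ having one-dimensional range, norm one, and being an admissible competitor — is routine, provided one disposes at the outset of the degenerate case where some summand is $\{0\}$.
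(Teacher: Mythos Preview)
The paper does not supply its own proof of this proposition; it is simply quoted from \cite{mm-jm-mp-br}, so there is nothing in the present paper to compare your argument against line by line. That said, your proposal is correct and is exactly the standard route used in \cite{mm-jm-mp-br} (specialized to $\ell_p$-sums and to rank-one operators): lift a near-extremal rank-one $S$ on a single summand to $\tilde S = J_{i_0} S P_{i_0}$ on the full sum, keep the norm and the rank, and bound $\nu(\tilde S)$ by $\nu(S)$ via the equality case of H\"older on the components of a norming pair. The one place to be slightly careful is precisely the one you flagged: from $1=\sum_i x_i^*(x_i)\le \sum_i \|x_i^*\|\,\|x_i\|\le 1$ you get both $x_{i_0}^*(x_{i_0})=\|x_{i_0}^*\|\,\|x_{i_0}\|$ and $\|x_{i_0}^*\|\,\|x_{i_0}\|\le 1$, which is all that is needed. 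Your remark about the degenerate summand $\{0\}$ is harmless (the infimum defining $n_1(\{0\})$ is vacuous), and the identification $X^*=[\oplus_i X_i^*]_{\ell_q}$ for $1<p<\infty$ is the only structural input. Nothing is missing.
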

However, rank-one numerical indices of vector valued spaces exhibit a different behavior from that of  classical numerical indices. In fact we have  :

\begin{pro} (\cite{MC-MM-JM}, Proposition 2.6) Let $X$ be a Banach space and  $K$ a compact Hausdorff space. Then,
$n_1(C(K,X))=1$ when $K$ is perfect and $n_1(C(K,X))= n_1(X)$ when $K$ is not perfect.
\end{pro}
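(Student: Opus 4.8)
\medskip

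\noindent\textbf{Proof proposal.}
The plan is to treat the two cases separately, the common engine being the elementary description of the numerical radius of a rank-one operator: if $Z$ is a Banach space and $T=z^{*}\otimes z$ (that is, $Tw=z^{*}(w)z$) with $\|z^{*}\|=\|z\|=1$, then
$$
\nu(T)=\sup\{\,|z^{*}(y)|\,|y^{*}(z)|\ :\ (y,y^{*})\in\Pi(Z)\,\}.
$$
I take $Z=C(K,X)$ and identify $Z^{*}$ with the regular countably additive $X^{*}$-valued Borel measures $\mu$ on $K$, with $\|\mu\|=|\mu|(K)$ and $\mu(g)=\int_{K}\langle g,d\mu\rangle$; for $t\in K$ I write $\mu(\{t\})\in X^{*}$ for the atom at $t$, so $|\mu|(\{t\})=\|\mu(\{t\})\|$. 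Note $n_{1}(Z)\le 1$ always, and $\delta_{t}\otimes x^{*}\in Z^{*}$ has norm $\|x^{*}\|$.

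\smallskip
\noindent\emph{The perfect case.} I claim \emph{every} norm-one rank-one $T=\mu\otimes f$ on $C(K,X)$ has $\nu(T)=1$, so that $n_{1}(C(K,X))=1$. Fix $\varepsilon>0$. The open set $U=\{t:\|f(t)\|>1-\varepsilon\}$ is nonempty, hence infinite since $K$ has no isolated points, while $\mu$ has at most $1/\varepsilon$ atoms of variation $\ge\varepsilon$; so choose $t_{\ast}\in U$ with $|\mu|(\{t_{\ast}\})<\varepsilon$ and then an open $V\ni t_{\ast}$ with $|\mu|(\overline{V})<2\varepsilon$. Let $u=f(t_{\ast})/\|f(t_{\ast})\|$, pick $\psi\in S_{X^{*}}$ with $\psi(u)=1$, and put $\Lambda=\delta_{t_{\ast}}\otimes\psi$, so $\|\Lambda\|=1$ and $\Lambda(f)=\|f(t_{\ast})\|>1-\varepsilon$. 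Choose $g_{0}\in S_{C(K,X)}$ with $\mu(g_{0})>1-\varepsilon$ (multiply by a unimodular scalar if needed), take a Urysohn function $\chi$ with $\chi(t_{\ast})=1$, $0\le\chi\le 1$, $\supp\chi\subset V$, and set $g=(1-\chi)g_{0}+\chi\,u$ (with $u$ also the constant function). Then $g(t_{\ast})=u$, $\|g(t)\|\le 1$ everywhere, and $\|g\|=1$, so $(g,\Lambda)\in\Pi(C(K,X))$. Since $g-g_{0}$ vanishes off $\overline V$ and $\|g-g_{0}\|\le 2$, $|\mu(g)-\mu(g_{0})|\le 2|\mu|(\overline V)<4\varepsilon$, hence $|\mu(g)|>1-5\varepsilon$. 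Therefore $\nu(T)\ge|\Lambda(Tg)|=|\mu(g)|\,|\Lambda(f)|>(1-5\varepsilon)(1-\varepsilon)$, and $\varepsilon\downarrow 0$ gives $\nu(T)=1$.

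\smallskip
\noindent\emph{The non-perfect case.} Fix an isolated point $t_{0}$; then $\mathbf 1_{\{t_{0}\}}$ is continuous, $K\setminus\{t_{0}\}$ is compact, and $C(K,X)$ is isometric to $X\oplus_{\infty}C(K\setminus\{t_{0}\},X)$ via $g\mapsto(g(t_{0}),g|_{K\setminus\{t_{0}\}})$. For ``$\le$'', lift a norm-one rank-one $S=x^{*}\otimes x_{0}$ on $X$ to $T=(\delta_{t_{0}}\otimes x^{*})\otimes(\mathbf 1_{\{t_{0}\}}\otimes x_{0})$, which is rank-one of norm one and corresponds to $S\oplus 0$; inspecting $\Pi$ of an $\ell_{\infty}$-sum gives $\nu(S\oplus 0)=\nu_{X}(S)$, so $n_{1}(C(K,X))\le\nu_{X}(S)$, and taking the infimum over $S$ yields $n_{1}(C(K,X))\le n_{1}(X)$ (alternatively use the quoted identity $n_{1}([\oplus X_{i}]_{\ell_{\infty}})=\inf_{i}n_{1}(X_{i})$). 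For ``$\ge$'' --- which in fact holds for \emph{any} compact $K$ --- take a norm-one rank-one $T=\mu\otimes f$ and $t_{0}$ with $\|f(t_{0})\|=1$, and write $\mu=\mu(\{t_{0}\})\delta_{t_{0}}+\mu'$, $a:=\|\mu(\{t_{0}\})\|$, $\|\mu'\|=1-a$, $\mu'(\{t_{0}\})=0$. Given $(u,\phi)\in\Pi(X)$ and $\varepsilon>0$, pick $h\in S_{C(K,X)}$ nearly norming $\mu'$ and, exactly as above, alter $h$ inside a small $V\ni t_{0}$ with $|\mu'|(\overline V)<\varepsilon$ to get $g\in S_{C(K,X)}$ with $g(t_{0})=u$ and $\mu'(g)$ of modulus $>1-a-O(\varepsilon)$ and of \emph{any} prescribed argument; aligning that argument with $\mu(\{t_{0}\})(u)$ gives $|\mu(g)|\ge|\mu(\{t_{0}\})(u)|+1-a-O(\varepsilon)$. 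Since $(g,\delta_{t_{0}}\otimes\phi)\in\Pi(C(K,X))$, after $\varepsilon\downarrow 0$,
$$
\nu(T)\ \ge\ \sup_{(u,\phi)\in\Pi(X)}\bigl(|\mu(\{t_{0}\})(u)|+1-a\bigr)\,|\phi(f(t_{0}))|\ =:\ A .
$$
Picking $\widetilde\mu\in S_{X^{*}}$ with $\|\widetilde\mu-\mu(\{t_{0}\})\|=1-a$ (e.g.\ $\widetilde\mu=\mu(\{t_{0}\})/a$ if $a>0$, any unit functional if $a=0$), one has $|\widetilde\mu(u)|\le|\mu(\{t_{0}\})(u)|+1-a$ on $S_{X}$, hence $n_{1}(X)\le\nu_{X}(\widetilde\mu\otimes f(t_{0}))\le A\le\nu(T)$. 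Combining, $n_{1}(C(K,X))=n_{1}(X)$.

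\smallskip
\noindent\emph{Main obstacle.} The crux is the surgical construction of $g$: one must keep $\|g\|=1$, force a prescribed value at one point, and barely move $\mu(g)$, all at once. This works because the surgery is confined to a neighborhood carrying arbitrarily small $|\mu|$-mass (resp.\ $|\mu'|$-mass) --- free at a non-atom, cheap at a small atom. The delicate point in the non-perfect case is that one may be forced to operate near a \emph{large} atom of $\mu$ (when $f$ attains its norm only at such points), and the deficit must then be recovered on the $X$-side through $\widetilde\mu$ and the identity $\|\widetilde\mu-\mu(\{t_{0}\})\|=1-\|\mu(\{t_{0}\})\|$; this, the book-keeping of arguments in the complex case, and the use of the Riesz-type representation of $C(K,X)^{*}$ are where care is needed.
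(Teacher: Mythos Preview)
The paper under review is a survey and does \emph{not} supply a proof of this proposition: it is quoted verbatim from \cite{MC-MM-JM} with only the reference. So there is no ``paper's own proof'' to compare against; I can only assess your argument on its merits.

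Your approach is sound and is essentially the natural one. The rank-one formula $\nu(z^{*}\otimes z)=\sup_{(y,y^{*})\in\Pi}|z^{*}(y)|\,|y^{*}(z)|$ reduces the problem to a two-sided norming question, and the Urysohn surgery---forcing a prescribed value of $g$ at one point while disturbing $\mu(g)$ only on a set of small $|\mu|$-mass---is exactly the right tool. In the perfect case the infinitude of the level set $\{\|f\|>1-\varepsilon\}$ lets you dodge the finitely many large atoms of $\mu$; in the non-perfect case your $\ell_\infty$-splitting at an isolated point gives ``$\le$'' immediately, and for ``$\ge$'' the decomposition $\mu=\mu(\{t_0\})\delta_{t_0}+\mu'$ together with outer regularity of $|\mu'|$ at $t_0$ makes the same surgery work. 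The comparison with the $X$-operator $\widetilde\mu\otimes f(t_0)$ via $\|\widetilde\mu-\mu(\{t_0\})\|=1-a$ is a clean way to close the estimate.

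Two small points worth tightening in a final write-up. First, the representation $C(K,X)^{*}\cong\mathrm{rcabv}(K,X^{*})$ (Singer's theorem) should be invoked explicitly, together with the fact that the variation $|\mu|$ of such a measure is a \emph{regular} positive Borel measure; you use outer regularity of $|\mu|$ (resp.\ $|\mu'|$) to produce the small neighborhood $V$, and this deserves a one-line justification. Second, the phrase ``of any prescribed argument'' needs a word in the real case (it collapses to a choice of sign) and in the degenerate cases $a=0$ or $a=1$ (which you handle implicitly but could flag). None of these is a gap---your sketch carries a correct proof.
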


Similar results also hold true for $n_1(L_{1}(\mu, X))$ and for $n_1(L_{\infty} (\mu, X))$ .

Of course one can extend the definition of rank-one numerical index  $n_1(X)$ to the  rank-r numerical index $n_r(X)$ in a natural way by  setting $$n_r(X):=  \inf\{ \nu(T):\,\,\, T\in L(X), \,\,||T|| =1, \,\, \dim(T(X)) \leq r\}$$ or to the compact numerical index by $$n_{comp}(X) :=  \inf\{ \nu(T):\,\,\, T\in L(X), \,\,||T|| =1, \,\,  T  \,\, \mbox{compact}\}. $$Then the inequalities:

$$ n_{r}(X) \geq n_{r+1} (X)  \geq n_{comp}(X) \geq n(X)$$  follow easily. Additionally  an example of a Banach space $X$ for which $n(X) < n_{comp}(X) < n_1(X)$  also exists \cite{MC-MM-JM}.
\begin{rem}
Theorems $2.12$ and $2.13$ can be restated with the same proof for $n_r(.)$  and for $n_{comp}(.)$. For example, if the norm on $X$ satisfies LCC, or equivalently  when
 $X$ and $X_m $ and $ P_m$ be as in Definition (\ref{defLCC}), then we have:
$$
n_r(X) = \displaystyle \lim_m n_{r}(X_m).
$$

Similarly, when the norm on $X$ satisfies GCC, or equivalently, when $X$ and $X_s $ and $ P_s$ be as in Definition (\ref{defGCC}), then we have:
$$
n_r(X) = \displaystyle \lim_s n_r(X_s).
$$
where  $S$ is any directed, infinite set. 
\end{rem}
Perhaps it is  also worth mentioning that if we let $A$ be a $C^*$-algebra, then the numerical index of $A$ as an algebra is $1$ or $1/2$ depending on whether $A$ is commutative or not commutative \cite{mjc-jd-cmm}. Later T. Huruya   \cite{th} showed that the numerical index of $A$ as a normed space is also $1$ or $1/2$ depending on $A$ is commutative or not commutative.
 
Note the we do not know the value of the numerical index of  the space of $k$-times differentiable functions on $[0,1]$, namely $C^{k}[0,1]$, or Lorentz or Orlicz spaces.
\begin{rem}
There are many Banach spaces whose numerical indices are known, for example the  following classical Banach spaces  all have numerical index $1$, which is the largest possible value,(see  \cite{JD-CM-JP-AW})
 $$c_0, \,\,c, \,\, \ell_1, \,\, \ell_{\infty},\,\, C(K)\,\mbox{for every compact K}\, ,\, \mbox{and } \,\,  L_1(\mu)$$
 all function algebras like $A(\mathbb{D}), \,\, H^{\infty}$ (see \cite {dw}    ) and finite co-dimensional subspaces of $C[0,1]$ (see \cite{kb-vk-mm-dw}). Furthermore, in \cite{JD-CM-JP-AW}  two families of Banach spaces with numerical index  $1$ are identified, these are L-spaces and M-spaces.
 \end{rem}
Recall that a Banach space $X$ is said to have numerical index $1$ if and only if , the norm of $T$ , for every $ T \in B_{X}$ can redefined as:
\begin {equation} \label{ni1}
 || T || =\sup  \{| x^{*}(Tx) | :~x\in S_{X},~x^{*}\in S_{X^{*}},~x^{*}(x)=1\}.
 \end{equation} 
 The above form of the norm of $T$ given in \ref{ni1}  has consequences on the geometry or the topology of the Banach space. For example, is it possible to re-norm a given infinite dimensional Banach space to have numerical index one?  There has been a considerable amount of research  done in this direction.  Recall that a space $X$ is said to be strictly convex when $ext(B_X)= S_X$ and it is well known that if $X^*$ is strictly convex (resp. smooth) then $X$ is smooth (resp. strictly convex), but the converse is not true. Furthermore, the  norm of $X$ is said to be Fr\'{e}chet smooth when the norm of $X$ is Fr\'{e}chet differentiable at any point of $S_X$.
  There are several results which shows that Banach spaces with numerical index one  cannot enjoy good convexity or smoothness properties unless they are one-dimensional. In particular, such spaces have no  Weakly Local Uniform Rotundity (WLUR) points in their unit ball, their norm is not Fr\'{e}chet smooth, and their dual norm is neither strictly convex nor smooth. For a detailed study of these we refer the reader to \cite{vk-mm-jm-rp}.

Furthermore, it is well known that  (see \cite{bff-dj1}), $\nu(T^*)= \nu(T)$  for every $T\in B(X)$ where  $T^*$ is the adjoint operator of $T$ which implies that $n(X^*) \leq n(X)$ for every Banach space $X$, since  the map $ T \rightarrow T^*$ is a linear isometry from $B(X)$ into $B(X^*)$

$$ n(X^*) := \inf\{\nu(S): \,\, S\in B(X^*), ||S|| =1\} \leq \inf \{ \nu(T^*):\,\, T\in B(X), ||T|| =1 \}=n(X)$$
for  a reflexive space $X$,  since $n(X)=n(X^{**}) \leq n(X^*)$ clearly $n(X)=n(X^*)$. Following is an  example taken from  \cite{kb-vk-mm-dw}, which proves the fact  that the numerical index of the dual of a Banach space can be strictly smaller than the numerical index of the space. 

\begin{exm}[\cite{kb-vk-mm-dw}, Example 3.1] There exists a Banach space $X$ such that $n(X)=1$ and $n(X^*) < 1$. where  $$ X=\{(x,y,z)\in c \oplus_{\infty} c \oplus_{\infty} c : \,\, \lim x + \lim y+ \lim z =0\}.$$ 

\end{exm}

Recall that a Banach space is said to be L-embedded if $X^{**} = X \oplus X_s$ for some closed subspace $X_s$ of $X^{**}$. There are  some  recent positive results concerning numerical index of the space $X$ and its dual $X^*$. For example, it is known that $n(X)=n(X^*)$   when $X$ is L-embedded in $X^{**}$.  If $n(X)=1$ and $X$ is M-embedded in $X^{**}$ then also $n(X^*)=1$ and $n(Y)=1$ for every $ X\subset Y \subset X^{**}$ \cite{MM}. Moreover, $n(X)=n(X^*)$  holds true when $X$ is a $C^*$-algebra \cite {VK-MM-RP}.  
Another interesting result is the following: if the dual of a Banach space $X$ has the Radon Nikodym Property (RNP) and $n(X)=1$, then $n(X^*)=1$  as well  (see \cite{kb-vk-mm-dw} for details). It is still an open question whether $n(X)=n(X^*)$ holds true for every Banach space with the RNP. 

\begin{rem}
The\textit { polynomial numerical index of order $k$ } of a Banach space was first introduced in \cite{ysc-dg-sgk-mm}. Let $E$ and $F$ be real or complex Banach spaces and $\mathcal{L}(^kE:F)$ denote the Banach space of continuous, $k$-linear mappings $E^k := E\times \dots \times E$ into $F$. $P:E \to F$ is called continuous $k$-homogeneous polynomial if there is $A \in \mathcal{L}(^kE:F)$ such that $P(x)= A(x,\dots, x)$ for all $x\in E$.  Let $\mathcal{P} (^kE:F)$ denote the Banach space of continuous $k$-homogeneous polynomials of $E$ into $F$, endowed with the polynomial norm $||P|| = \displaystyle \sup_{x\in B_E} ||P(x)||$. 

For each  $P \in  \mathcal{P} (^kE:F)$, the numerical range of $P$ is the subset $W (P)$ of the scalar field defined by
 
$$W(P)= \{ x^{*}(Px)  :~x\in S_{X},~x^{*}\in S_{X^{*}},~x^{*}(x)=1\}$$

 and the numerical radius of P is given by
$$\nu(P ) = \sup\{|\lambda| : \lambda  \in  W (P )\} .$$
The polynomial numerical index of order $k$ of the space $E$, $n^{(k)} (E)$ is defined as: 
$$ n^{(k)}(E) = \mbox{inf} \{\nu(P): \,\,\, P\in S_{\mathcal{P} (^kE:E)} \,\,\, \}$$ or equivalently it 
is the greatest constant $c$ such that $$ c||P|| \leq \nu(P)$$ for every $P\in \mathcal{P} (^kE:E) $. Note that polynomial numerical index of order $1$ is the ``classical" numerical index and just like in the case of the classical numerical index, we have: $0 \leq n^{(k)} (E) \leq 1$ and $n^{(k)}(E) >0$ if and only if $|| . ||$ and $\nu( . )$ are equivalent norms on $\mathcal{P} (^kE:E)$. Clearly if $E_1$ and $E_2$ are isometrically isomorphic Banach spaces , then $n^{(k)} (E_1) = n^{(k)}(E_2)$ hold. For more on the properties of the polynomial numerical index of order $k$, we refer the reader to \cite{dg-bcg-mm-mm-jm}, \cite{hjl-mm-jm}. \cite{ysc-dg-sgk-mm} and \cite{sgk}.
\end{rem}


\noindent
\mbox{~~~~~~~}Asuman G\"{u}ven AKSOY\\
\mbox{~~~~~~~}Claremont McKenna College\\
\mbox{~~~~~~~}Department of Mathematics\\
\mbox{~~~~~~~}Claremont, CA  91711, USA \\
\mbox{~~~~~~~}E-mail: aaksoy@cmc.edu \\ \\

\noindent
\mbox{~~~~~~~}Grzegorz LEWICKI\\
\mbox{~~~~~~~}Jagiellonian University\\
\mbox{~~~~~~~}Department of Mathematics\\
\mbox{~~~~~~~}\L ojasiewicza 6, 30-348, Poland\\
\mbox{~~~~~~~}E-mail: Grzegorz.Lewicki@im.uj.edu.pl\\\\


\begin{thebibliography}{99}
\bibitem{MA-GR} M. D. Acosta and M. Ruiz Galan, \textit{Reflexive spaces and numerical radius attaining operators}, Extracta Math. 15, (2000), no.2, 247-255.
\bibitem{aag-cbl} {}A. G. Aksoy and B. L. Chalmers, \textit{Minimal numerical
radius extension of operators}, Proc. Amer. Math. Soc, Vol.135, no.4 (2007), 1039-1050 .
\bibitem{aag-gl} A. G. Aksoy  and G. Lewicki, \textit{Best approximation in numerical radius}, Numer. Funct. Anal. Optim., 32, Vol 6, (2011), 593-609.

\bibitem{AL} A. G. Aksoy and G. Lewicki, \textit{Limit theorems for the numerical index}, J. Math. Anal. Appl. 398 (2013), 296-302.
\bibitem{bff-dj1} {}F. F. Bonsall and J. Duncan, \textit{Numerical ranges of
operators on normed spaces and of elements of normed algebras,}
London Math. Soc., Lecture Note Ser. 2, Cambridge Univ. Press,
(1971).

\bibitem{bff-dj2} {}F. F. Bonsall and J. Duncan, \textit{Numerical ranges II,}
London Math. Soc., Lecture Note Ser. 10, Cambridge Univ. Press,
(1971).
\bibitem{kb-vk-mm-dw}  K. Boyko, V. Kadets, M. Martin and D. Werner, \textit{ Numerical index of Banach spaces and duality}, Math. Proc. Cambridge Phil. Soc. 142 ( 2007), 93-102. 

\bibitem{MC-MM-JM} M. Chica, M. Martin and J. Meri, \textit{ Numerical radius of rank-1 operators on Banach spaces}, Q. J. Math. 65 (2014), no.1, 89-100.
\bibitem{mjc-jd-cmm} M. J. Crabb, J. Duncan and C. M. McGregor, \textit{ Characterization of  commutativity for $C^*$-algebras}, Glasgow Math. J. 15 (1974), 172-175.

\bibitem{ysc-dg-sgk-mm} Y. S. Choi, D. Garcia, S. G. Kim and M. Maestre \textit{The polynomial numerical index of a Banach space}, Proc. Edinburg Math. Soc. (2006), 49, 39-52.

\bibitem{JD-CM-JP-AW} J. Duncan, C. M. McGregor, J. Pryce and A. White, \textit{The numerical index of a normed space}, J. London Math. Soc. 2,(1970), 481-488.
\bibitem {aga-ed-kham} E. Ed-dari, M. A. Khamsi, A. G. Aksoy \textit{On the numerical index of vector-valued function spaces},
Linear Multilinear Algebra, 55, No.6, (2007), 507-513.
\bibitem{ee} {}E. Ed-dari, \textit{On the numerical index of Banach
spaces}, Linear Algebra Appl. 403 (2005), 86-96.
\bibitem{ee-2}E. Ed-dari and M. A. Khamsi, \textit{The numerical index of the $L_p$ space}, Proc. Amer. Math. Soc, Vol.134 no.7 (2005) 2019-2025.

\bibitem{fc-mm-pr} {}C. Finet, M. Mart\'{\i}n and R. Pay\'a, \textit{Numerical
index and renorming}, Proc. Amer. Math.
 Soc. 131 (2003), no. 3,
871-877.

\bibitem {dg-bcg-mm-mm-jm} D. Garcia, B. C. Grecu, M. Moestre, M. Martin and J. Meri \textit{Polynomial numerical indices of $C(K)$ and $L_1(\mu)$} Proc. Amer. Math. Soc., Vol 142, No 4,(2014), 1229-1235.


\bibitem{BG} B. W. Glickfeld \textit{On an inequality of Banach algebra geometry and semi-inner product space theory}, Illinois J. Math. 14,  (1970), 76-81.
\bibitem{gke-rdkm} {}K. E. Gustafson and D. K. M. Rao, \textit{Numerical
range: The Field of Values of Linear Operators and Matrices},
(Springer-Verlag UTX, New York, 1997).
\bibitem{th} T. Huruya, \textit{ The normed space numerical index of $C^*$-algebras}. Proc. Amer. Math. Soc. 63 (1977), no.2, 289-290.

\bibitem{vk-mm-jm-rp} {} V. Kadets, Miguel Mart\'{i}n, Javier Mer\'{i} and Rafel Pay\'{a},\textit{ Convexity and smootheness of Banach spaces with numerical index 1}, Illinois J. of Math, Vol. 53, no.1, (2009), 163-182.
\bibitem {VK-MM-RP} V. Kadets, Miguel Mart\'{i}n, Javier Mer\'{i} and Rafel Pay\'{a},\textit{ Recent progress and open questions on the numerical index of Banach spaces} Rev. R. Acad. Cien. Serie A. Math., vol. 2000 (1-2), ( 2006), 155-182.

\bibitem{sgk} S. G. Kim, \textit{Three kinds of numerical indices of a Banach space} Mathematical Proceedings of the Royal Irish Academy, Vol. 112, No.1, (2012), 21-35.

\bibitem{hjl-mm-jm} H. J. Lee, M. Martin and J. Meri \textit{ Polynomial numerical indices of Banach spaces} , Linear Algebra and its Appl. , 435, (2011), 400-408.

\bibitem{lg-mm-pr} {}G. L\'opez, M. Mart\'{\i}n and R. Pay\'a, \textit{Real Banach
spaces with numerical index 1}, Bull. London Math. Soc. 31 (1999)
207-212.

\bibitem{lg} {}G. Lumer, \textit{Semi-inner-product spaces},
Trans. Amer. Math. Soc. 100 (1961), 29-43.
 \bibitem{MP} M. Martin and R. Pay\'{a}, \textit{Numerical index of vector-valued function spaces}, Studia Math., 142, (2000), 269-280.
 \bibitem{MM-JM-MP} M. Martin, J. Merier and M. Popov, \textit{ On the numerical radius of operators in Lebesgue spaces}, J. Funct. Ana. 261, (2011), no.1, 149-168.
 
 \bibitem{MM-JM} M. Martin, J. Meri, \textit{ A note on the numerical index of the$ L_p$ space of dimension $ 2$},  Linear Multilinear Algebra 57 ( 2009), no.2, 201-204.
 \bibitem{MM-ARV} M. Martin, A. R. Villena,\textit{Numerical index and Daugavet property for $L_{\infty}(\mu, X)$}, Proc. Edinburg Math. Soc. 46, 415-420.
\bibitem{MM} M. Martin, \textit{ Positive and negative results on the numerical index of Banach spaces and duality}, Proc. Amer. Math. Soc., 137 (2009), 3067-3075.
\bibitem{mm} {}M. Mart\'{\i}n, \textit{A survey on the numerical index of Banach space}, Extracta Math 15 (2000), 265-276.
\bibitem{mm-jm-mp} {} M. Martin, J. Meri, and M. Popov, \textit{On the numerical index of $L_p(\mu)$-spaces}, Israel J. Math. 184 (2011), 183-192.
\bibitem {mm-jm-mp-br}{}M. Martin, J. Meri, M. Popov and B. Randrianantoanina, \textit{Numerical index of absolute sums of Banach spaces}, J. Math. Anal. Appl. 375 (2011) no. 1, 207-222.
\bibitem{pal} A. R. Palacios, \textit{Numerical ranges of uniformly continuous functions on the unit sphere of a Banach space}, J. Math. Anal. Appl. 297 (2004), 472 - 476.
\bibitem{dw} D. Werner, \textit{Daugavet equation for operators on function spaces}, J. Funct. Anal, 143, (1997), 117-128
\end{thebibliography}
\end{document}